\let\origsection=\section \def\section{\@ifstar{\origsection*}{\mysection}} 
\def\mysection{\@startsection{section}{1}\z@{.7\linespacing\@plus\linespacing}{.5\linespacing}{\normalfont\scshape\centering\S}}
\colorlet{darkishRed}{red!80!black}
\colorlet{darkishBlue}{blue!60!black}
\colorlet{darkishGreen}{green!60!black}
\crefname{mainresult}{Theorem}{Theorems}
\renewcommand{\PrintDOI}[1]{\doi{#1}}
\let\setminus=\smallsetminus
\renewcommand{\subset}{\subseteq}
\renewcommand{\leq}{\leqslant}
\renewcommand{\geq}{\geqslant}
\renewcommand{\le}{\leq}
\newtheorem{theorem}{Theorem}[section]
\newtheorem{lemma}[theorem]{Lemma}
\newtheorem{observation}[theorem]{Observation}
\newtheorem{mainresult}{Theorem} 
\newtheorem{maincorollary}[mainresult]{Corollary}
\newcounter{claimcounter}[theorem]
\newtheorem{claim}[claimcounter]{Claim}
\newtheorem*{claim*}{Claim}
\crefname{claim}{Claim}{Claims}
\newcounter{subclaimcounter}[claimcounter]
\newtheorem*{subclaim*}{Subclaim}
\theoremstyle{definition}
\crefname{mainexample}{Example}{Examples}
\newtheorem{example}[theorem]{Example}
\crefname{example}{Example}{Examples}
\newtheorem{algo}[theorem]{Algorithm}
\crefname{routine}{Routine}{Routines}
\crefname{subroutine}{Subroutine}{Subroutines}
\crefname{subsubroutine}{Subsubroutine}{Subsubroutines}
\crefname{step}{Step}{Steps}
\theoremstyle{remark}
\newcommand{\COMMENT}[1]{{}}
\let\eps=\varepsilon
\let\epsilon=\varepsilon
\let\theta=\vartheta
\let\rho=\varrho
\let\phi=\varphi
\def\N{\mathbb N}
\def\calCommandfactory#1{%
  \expandafter\def\csname c#1\endcsname{\mathcal{#1}}}
\def\frakCommandfactory#1{%
  \expandafter\def\csname frak#1\endcsname{\mathfrak{#1}}}
\newcounter{ctr}
  \edef\X{\@Alph\c@ctr}
\newcommand{\arXivOrNot}[2]{\ifbool{arXiv}{{#1}}{{#2}}}
\newcommand{\pdfOrNot}[2]{\ifbool{pdfBool}{{#1}}{{#2}}}
\newcommand\thankssymb[1]{\textsuperscript{\@fnsymbol{#1}}}
\newcommand{\lk}{({<}\,k)}
\newcommand{\lek}{({\le}\,k)}
\newcommand{\lA}{({<}\,\aleph_0)}
\newcommand{\td}{tree-decom\-posi\-tion}
\newcommand{\up}{{\uparrow}}
\newcommand{\strictup}{ \mathring{\uparrow} }
\DeclareMathOperator{\Dom}{Dom}
\DeclareMathOperator{\rt}{root}
\DeclareMathOperator{\interior}{int}
\newcommand{\size}{|\!\cdot\!|}
\newcounter{mylabelcounter}
\newcommand{\labelText}[2]{%
#1\refstepcounter{mylabelcounter}%
\immediate\write\@auxout{%
  \string\newlabel{#2}{{1}{\thepage}{{\unexpanded{#1}}}{mylabelcounter.\number\value{mylabelcounter}}{}}%
}%
}
\newcommand\footnoteref[1]{\protected@xdef\@thefnmark{\ref{#1}}\@footnotemark}
\definecolor{cMaroon}{HTML}{93152a}
\newcommand{\defn}[1]{{\color{cMaroon}{\emph{#1}}}}
\newcommand{\defnm}[1]{{\color{cMaroon}{#1}}}
\DeclareMathOperator{\tht}{ht}
\DeclareMathOperator{\is}{\interior(\sigma)}
\DeclareMathOperator{\cl}{cl}
\title{Displaying prescribed sets of ends by linked tree-decompositions}
\author[S.~Albrechtsen]{Sandra Albrechtsen\thankssymb{1}}
\thanks{\thankssymb{1} Supported by the Alexander von Humboldt Foundation in the framework of the Alexander von Humboldt Professorship of Daniel Král' endowed by the Federal Ministry of Education and Research.}
\address{(S.~Albrechtsen) Leipzig University, Institute of Mathematics, Augustusplatz 10, 04109 Leipzig, Germany}
\author[M.~Pitz]{Max Pitz}
\author[R.~Schaut]{Roman Schaut\thankssymb{2}}
\thanks{\thankssymb{2} Supported by a State Graduate Funding Program scholarship of the University of Hamburg.}
\address{(M.~Pitz and R.~Schaut) University of Hamburg, Department of Mathematics, Bundesstraße 55 (Geomatikum), 20146 Hamburg, Germany}
\keywords{Tree-decomposition, linked, infinite graph, ends}
\subjclass[2020]{05C63, 05C05, 05C83, 05C40}
\begin{document}

\begin{abstract}
We show that if a subset  $\Psi$ of the ends of a graph $G$ can be displayed by a tree-decomposition of finite adhesion, then it can also be displayed by a linked such tree-decomposition. This tree-decomposition captures all combinatorial information of the ends in $\Psi$: their degrees, their sets of dominating vertices, and their combined degrees.
\end{abstract}

\vspace*{-24pt}

\maketitle

\section{Introduction} \label{sec:Intro}

Following Thomas \cite{thomas1989wqo}, we say a rooted \td\ $(T, \{V_t \colon t\in T\})$ of a (possibly infinite) graph $G$ is \defn{linked} if for any two comparable nodes $s < t \in T$, the maximum number of disjoint $V_s{-}V_t$ paths in $G$ equals the size of a smallest adhesion set corresponding to an edge on the $s{-}t$ path in $T$. (For formal definitions, see \cref{section:Background}). 
For finite graphs, linked tree-decompositions have been instrumental in Robertson and Seymour's proof of the graph minor theorem \cite{ROBERTSON1990227}. 
For infinite graphs, the first seminal result regarding linked tree-decompositions has been the following theorem due to  K{\v r}{\'i}{\v z} and Thomas, which formed the basis for Thomas's proof \cite{thomas1989wqo} that the (infinite) graphs of tree-width $<k \in \N$ are better-quasi-ordered under the minor relation.

\begin{theorem}[{K{\v r}{\'i}{\v z} and Thomas 1991  \cite{kriz1991mengerlikepropertytreewidth}}]
\label{thm_intro_krizthomas}
 For any graph $G$ and $k \in \N$, the following statements are equivalent:
\begin{enumerate}
    \item $G$ admits a linked, rooted tree-decomposition into parts of size $<k $.
      \item $G$ admits a tree-decomposition into parts of size $<k $.
\end{enumerate}
\end{theorem}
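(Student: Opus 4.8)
The implication (i)$\Rightarrow$(ii) is trivial: forgetting the root and the linkedness, a linked rooted \td\ of $G$ into parts of size $<k$ is in particular a \td\ of $G$ into parts of size $<k$. The content is the converse, so suppose we are handed a \td\ of $G$ of width $<k$ and must produce a linked one of the same width. I would follow the strategy that succeeds for finite graphs—repeatedly repair violations of linkedness by a local rerouting surgery—but replace the naive iteration (which need not terminate, nor obviously converge, when $G$ is infinite) by a single application of Zorn's lemma.

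The \emph{repair step} is the following. Suppose a \td\ $(T,\mathcal V)$ of width $<k$ is not linked, witnessed by comparable nodes $s<t$ (one may as well aim for the stronger property that the Menger equality holds for \emph{all} pairs of nodes, which gives linkedness for every choice of root) and an integer $p$ such that $G$ has no $p{+}1$ disjoint $V_s$--$V_t$ paths although every adhesion set on the $s$--$t$ path in $T$ has size $>p$. Since $V_s$ and $V_t$ are finite, Menger's theorem produces a $V_s$--$V_t$ separator $S$ with $|S|=p$, hence a separation $(A,B)$ of $G$ with $V_s\subseteq A$, $V_t\subseteq B$ and $A\cap B=S$. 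The standard rerouting along $(A,B)$ rebuilds the segment of $T$ between $s$ and $t$ so that the minimum adhesion on the new $s$--$t$ path is exactly $|S|=p$, while every new part lies in $(V_r\cap A)\cup S$ or in $(V_r\cap B)\cup S$ for some old part $V_r$; in particular the width stays $<k$. Checking that the result is again a \td\ with these bounds is routine bookkeeping.

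To avoid an infinite regress I would run this argument ``in one stroke'' on the level of separations. Let $\mathcal N_0$ be the nested separation system of order $<k$ underlying the given \td, and use Zorn's lemma to extend it to a nested family $\mathcal N\supseteq\mathcal N_0$ of separations of $G$ of order $<k$ that is maximal subject to a coherence condition—roughly, that any two members of $\mathcal N$ comparable under their induced orientation satisfy the Menger equality between their separator sets—chosen so that it (a) is preserved under unions of chains and (b) forces the \td\ read off from $\mathcal N$ (via the standard correspondence between nested separation systems and tree-decompositions) to be linked. Because the $\mathcal N$-decomposition refines the original one, each of its parts sits inside an old part, so its width is still $<k$; it remains to see it is linked. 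If it were not, the repair step would yield from a violation a separation $(A,B)$ of order $<k$ that can be \emph{added} to $\mathcal N$ without spoiling nestedness or coherence, contradicting maximality. The main obstacle—and precisely the point at which the uncrossing arguments of the finite proof reappear—is showing that this repairing separation $(A,B)$ can always be chosen nested with every member of $\mathcal N$ and compatible with the coherence condition; once that is secured, the remaining steps are routine.
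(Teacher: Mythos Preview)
The paper does not contain a proof of this theorem: it is quoted as a known result of K\v{r}\'{\i}\v{z} and Thomas and serves only as background and motivation for the paper's actual contribution (\cref{main:LinkedTD:Psi}). There is therefore no ``paper's own proof'' to compare your proposal against.

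As for the proposal itself, it is a plausible strategy sketch rather than a proof, and the gap is precisely where you locate it. You invoke Zorn's lemma on nested families $\mathcal N$ of separations of order $<k$ satisfying an unspecified ``coherence condition'', but everything hinges on what that condition is. It must simultaneously (a) be closed under unions of chains, (b) guarantee that the tree-decomposition read off from a maximal $\mathcal N$ has all parts of size $<k$ (maximal nested families of low-order separations in infinite graphs need not have finite torsos), and (c) be violated by some addable separation whenever linkedness fails. You do not define the condition, and you concede that the key step---showing the repairing separation $(A,B)$ can be uncrossed against \emph{all} of $\mathcal N$ while preserving coherence---is unresolved. That is the whole difficulty: a single repair may interact with infinitely many members of $\mathcal N$, and there is no evident well-founded measure that forces the process to close off. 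The original K\v{r}\'{\i}\v{z}--Thomas argument handles exactly this point by a careful well-ordering and lexicographic minimisation over adhesion profiles, not by a bare Zorn argument; your sketch has not yet supplied a substitute for that.
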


 The following theorem generalizes \cref{thm_intro_krizthomas} from graphs with tree-decompositions into parts of uniformly bounded finite size to graphs with tree-decompositions into  (arbitrarily large) finite parts:   

\begin{theorem}[{Albrechtsen, Jacobs, Knappe and Pitz 2024~\cite{LinkedTDInfGraphs}}]\label{main:LinkedTightCompTreeDecompnew}

   For any graph $G$, the following statements are equivalent:
\begin{enumerate}
     \item \label{itm:LinkedTD:Linked} $G$ admits a linked, tight, and componental rooted tree-decomposition into finite parts.
      \item $G$ admits a tree-decomposition into finite parts.
    \item Every component of $G$ has a normal spanning tree.
\end{enumerate}
\end{theorem}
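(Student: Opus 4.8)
The plan is to prove the cycle of implications (i) $\Rightarrow$ (ii) $\Rightarrow$ (iii) $\Rightarrow$ (i). The first is immediate, since a linked, tight, componental rooted \td\ into finite parts is in particular a \td\ into finite parts. The equivalence (ii) $\Leftrightarrow$ (iii) is classical and I would only quote it: for a connected graph $G$, a normal spanning tree $T$ of $G$ yields the \td\ of $G$ whose decomposition tree is $T$ itself and whose part at a node $t$ is the set of vertices on the path of $T$ from the root to $t$ --- this part is finite, being a path in $T$, and the family is a \td\ precisely because every edge of $G$ joins comparable nodes of $T$ --- while conversely a normal spanning tree is read off from a \td\ of $G$ into finite parts by a standard recursion over the decomposition tree. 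Since a rooted \td\ of a disjoint union of graphs is assembled from rooted decomposition trees of the components by hanging them below one new root carrying the empty part --- which preserves linkedness, tightness and componentality --- we may assume $G$ connected, so the content of the theorem is the implication (iii) $\Rightarrow$ (i).

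For (iii) $\Rightarrow$ (i), fix a normal spanning tree $T$ of the connected graph $G$, rooted at a vertex $r$, and recall the key consequence of normality: a vertex outside the subtree $T_t$ of descendants of a node $t$ can have a neighbour in $T_t$ only if it lies on the $r{-}t$ path of $T$. I would build the required decomposition as a tree $S$ of nested separations of $G$, obtained by a recursion steered by $T$. Each node of $S$ carries a connected induced subgraph $C$ of $G$ with $N_G(C)$ finite, the root carrying $C=G$. Given a node carrying $C$ with (finite) boundary $X:=N_G(C)$, I choose inside $C$ a finite separator $Y$ that is \emph{linked to $X$ in $C\cup X$} --- the maximum number of disjoint $X{-}Y$ paths in $C\cup X$ equals $\min(|X|,|Y|)$ --- with $Y$ pushed as far ``outward'' along $T$ as a fixed such Menger linkage allows; the children of the node then carry the components of $C-Y$, each again with finite boundary contained in $X\cup Y$, and the part at the node is $X$ together with the vertices of $C$ lying in no child subgraph. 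Normality of $T$, combined with processing the vertices of $G$ in a well-order refining the tree-order of $T$, forces the recursion to reach every vertex: one arranges that each vertex eventually enters some separator $Y$ or some boundary, hence some part. Granting this, $(S,\mathcal{V})$ is routinely checked to be a rooted \td\ into finite parts that is componental (the parts are components of $G$ minus separators, plus those separators) and tight (the adhesion sets are exactly the sets $N_G(C)$) by construction.

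What remains, and is the real work, is linkedness. For comparable $s<_S t$ the adhesion sets along the $s{-}t$ path of $S$ form a chain $X_0,X_1,\dots,X_m$ in which each $X_i$ is linked to $X_{i+1}$ in the region between them; but, as simple examples show, consecutive pairwise linkedness alone does \emph{not} give a linked \td, since the Menger paths on the two sides of an $X_i$ may fail to meet. The remedy is to strengthen the recursion so that the chosen linkages are \emph{compatible along each branch}: the disjoint paths realising the linkage of $X_i$ with $X_{i+1}$ start exactly where those realising the linkage of $X_{i-1}$ with $X_i$ end, so that consecutive linkages concatenate. Splicing at a minimum-size adhesion set $X_j$ on the $s{-}t$ path then yields $|X_j|=\min_i|X_i|$ disjoint $V_s{-}V_t$ paths, which is precisely linkedness. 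I expect the main obstacles to be the two infinitary points hidden here: first, making the ``outward'' choice of the separators $Y$ precise enough that the recursion provably exhausts $V(G)$ while all boundaries stay finite --- in particular ruling out a vertex chased through infinitely many boundaries without ever joining a part, which is where normality of $T$ and careful bookkeeping are needed --- and second, carrying the compatible finite linkages through a possibly infinite branch of $S$ so that they produce honest disjoint $V_s{-}V_t$ paths for \emph{every} node $t$, however deep. It is the finite-adhesion hypothesis --- forcing only finitely many distinct adhesion sizes along any branch, hence their eventual stabilisation --- that makes this limiting argument go through, just as the uniform size bound does in the setting of \cref{thm_intro_krizthomas}.
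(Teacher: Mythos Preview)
This theorem is not proved in the present paper; it is quoted from \cite{LinkedTDInfGraphs} as background, so there is no ``paper's own proof'' to compare against here. That said, your sketch does not stand on its own, and it also diverges from the method that \cite{LinkedTDInfGraphs} and the present paper use.

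The substantive gap is in your treatment of linkedness. You correctly observe that consecutive pairwise linkedness of the adhesion sets $X_0,\dots,X_m$ is not enough, and you propose to fix this by arranging that the Menger linkages are \emph{compatible} along each branch (the paths realising $X_{i-1}\to X_i$ end exactly where those realising $X_i\to X_{i+1}$ begin). But you give no mechanism for achieving this, and in general it cannot be arranged greedily: once $Y=X_i$ is fixed, the $X_{i-1}{-}X_i$ linkage may hit $X_i$ in one subset while every maximal $X_i{-}X_{i+1}$ linkage starts in a different subset. The concatenation argument you sketch only works if such compatibility is already in place, so this is precisely the point where the proof has to do real work, and your proposal defers it. Your closing remark is also incorrect: finite adhesion (or even finite parts) does \emph{not} force only finitely many distinct adhesion sizes along a branch, nor any eventual stabilisation---the sizes can be $1,2,3,\dots$ along a ray---so whatever limiting argument you had in mind cannot rest on that.

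For comparison, the route taken in \cite{LinkedTDInfGraphs} (and adapted in \cref{section:Algorithm,section:prescribedLinked} of this paper) does not try to thread compatible Menger linkages at all. Instead, at each node one runs an algorithm that selects \emph{end-linked} $(<\!k)$-regions---regions $C$ with $N(C)$ linked to an end living in $C$---chosen to be nicest (minimum $|N(C)|$, then $\subseteq$-maximal) and nested with everything chosen so far. Linkedness then follows a posteriori: for comparable edges $e<f$, if the minimum adhesion on $eTf$ were too small one finds, via \cref{observation:algoLinkedEnds}, an end-linked region recorded in some $\cD_e$ whose neighbourhood both is linked to $V_e$ and has size below the minimum, contradicting the choice of the minimum edge. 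The key idea you are missing is this global bookkeeping of end-linked regions, which replaces any attempt to make local linkages concatenate.
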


Graphs with normal spanning trees have been characterized by Jung \cite{jung1969wurzelbaume} and by Pitz \cites{pitz2020unified,pitz2021proof}. 
The properties `componental' and `tight' are included in \ref{itm:LinkedTD:Linked} of \cref{main:LinkedTightCompTreeDecompnew} to prohibit certain trivial, undesirable solutions for finding linked tree-decompositions into finite parts, see \cite{LinkedTDInfGraphs}. 

\cref{main:LinkedTightCompTreeDecompnew} has a number of consequences, in particular for the end structure of graphs: every graph which admits a \td\ into finite parts has a \td\ into finite parts that homeomorphically displays all the ends of $G$ and their combined degrees, resolving a question of Halin from 1977 \cite{halin1977systeme}*{\S6}. This latter \td\ yielded short, unified proofs of the characterisations due to Robertson, Seymour and Thomas of graphs without half-grid minor \cite{robertson1995excluding}*{(2.6)}, and of graphs without binary tree subdivision \cite{seymour1993binarytree}*{(1.5)}. We refer the reader to the introduction of \cite{LinkedTDInfGraphs} for details.

\medskip
Now both theorems above assert that if a graph has a tree-decomposition with a certain property (here: width), then it also has a linked such tree-decomposition. In this paper, we investigate whether this pattern also extends to other tree-decompositions. Having in mind the above-mentioned consequences for the end structure of infinite graphs, a crucial case in point would be Carmesin's theorem from 2014~\cite{carmesin2019displayingtopends}, which asserts that every graph $G$ has a tight, componental tree-decomposition of finite adhesion that displays all the undominated ends of $G$ (see also \cite{pitz2022} for a short proof). Is there also a linked such tree-decomposition in Carmesin's theorem? If so then, as above, such a tree-decomposition would homeomorphically display all the undominated ends of the underlying graph $G$ and their  degrees.

 The purpose of this paper is to answer that question in the affirmative. 
 In fact, we will show something considerably more general: 
 Whenever a graph $G$ has a tree-decomposition of finite adhesion that displays some prescribed subset of ends $\Psi$ of $G$, then there is also a linked such tree-decomposition.

\begin{mainresult} \label{main:LinkedTD:Psi}
 For any graph $G$ and any set $\Psi$ of ends of $G$ the following statements are equivalent:
\begin{enumerate}[label=\rm{(\roman*)}]
    \item \label{itm:main:Psi:i} $G$ admits a linked, tight and componental rooted \td~of finite adhesion displaying $\Psi$.
    \item \label{itm:main:Psi:ii} $G$ admits a (rooted) tree-decomposition of finite adhesion displaying $\Psi$.
    \item \label{itemGdelta} $\Psi$ is $G_\delta$ in $|G|$.
\end{enumerate}
\end{mainresult}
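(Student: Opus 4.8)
The implication \ref{itm:main:Psi:i}$\Rightarrow$\ref{itm:main:Psi:ii} is immediate. For \ref{itm:main:Psi:ii}$\Rightarrow$\ref{itemGdelta}, fix a \td\ $(T,\{V_t : t\in T\})$ of finite adhesion displaying $\Psi$ and a root $t_0$ of $T$. For an edge $e=uv$ of $T$ with $u<v$, let $\hat B_e\subseteq|G|$ consist of all vertices and ends of $G$ on the $v$-side of the finite separator $X_e$; this is an open subset of $|G|$, and $\hat B_e\cap\hat B_{e'}=\emptyset$ for incomparable edges $e,e'$, since then the two sides are separated in $G$ by the finite sets $X_e$ and $X_{e'}$. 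For $n\ge1$ put $W_n:=\bigcup\{\hat B_e : e=uv,\ u<v,\ v\text{ at distance }n\text{ from }t_0\}$, which is open. Since the sets $\hat B_e$ containing a fixed end form a chain in $T$, one checks that an end $\omega$ of $G$ lies in $\bigcap_{n\ge1}W_n$ precisely when there is a ray $t_0t_1t_2\cdots$ in $T$ with $\omega\in\hat B_{t_nt_{n+1}}$ for all $n$, i.e.\ precisely when $\omega$ converges to an end of $T$; by the meaning of ``displays'' this says $\omega\in\Psi$. Hence $\Psi=\bigcap_{n\ge1}W_n$ is $G_\delta$ in $|G|$.

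For \ref{itemGdelta}$\Rightarrow$\ref{itm:main:Psi:i} --- the substance of the paper --- I would work in two stages. First, produce \emph{some} tight, componental \td\ of finite adhesion displaying $\Psi$: writing $\Psi=\bigcap_n O_n$ with $O_n$ open and decreasing, one refines this to a nested family of finite separations and reads off such a \td\ much as Carmesin does for the undominated ends in \cite{carmesin2019displayingtopends} (alternatively, one cites the corresponding displaying result directly), and `tight' and `componental' are then imposed as in \cite{carmesin2019displayingtopends,LinkedTDInfGraphs} without affecting finite adhesion or the displayed set. The real step is the second one: upgrading such a \td\ to a linked one. For this I would adapt the ``tidying'' surgery of K{\v r}{\'i}{\v z} and Thomas \cite{kriz1991mengerlikepropertytreewidth}, in the infinite-graph form developed in \cite{LinkedTDInfGraphs}: whenever comparable nodes $s<t$ are not linked, Menger's theorem supplies a separator $X$ whose size is strictly below the least adhesion along the $s{-}t$ path in $T$, and one refines the \td\ by splitting that path and inserting a new part built around $X$. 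Since $X$ is finite, this inserted part is finite, so it traps no end and the tree gains no new end locally; hence each individual surgery preserves `finite adhesion', `tight', `componental' and `displays $\Psi$'.

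The crux is to marshal these surgeries into a single transfinite process that stabilises. In the bounded-width setting of \cite{kriz1991mengerlikepropertytreewidth} the width bounds a finite measure, and in \cite{LinkedTDInfGraphs} a normal spanning tree of $G$ organises the recursion; but here $G$ need not admit a normal spanning tree (for instance when $\Psi=\emptyset$), so I would instead use the decomposition tree $T$ itself --- which is always a normal spanning tree of itself --- as the backbone, carrying out the tidying along the tree-order of $T$ and ordering the individual surgeries by the size of the inserted separator together with the depth in $T$ at which it is inserted. One then has to check that the limit of the resulting chain of \td s is again a \td, that finite adhesion and the three normalisation properties pass to the limit, and --- the point with no analogue in earlier work --- that the limit still displays \emph{exactly} $\Psi$: no end of $\Psi$ may be absorbed into a part, and, more delicately, the transfinitely refined tree must not acquire a new end to which some end of $G$ outside $\Psi$ converges. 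Controlling this last phenomenon, i.e.\ keeping the end set of the limiting tree pinned to $\Psi$ while driving the tidying to completion, is, I expect, the technical heart of the proof.
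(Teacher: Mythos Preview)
Your treatment of the easy implications is essentially correct; the paper argues \ref{itm:main:Psi:ii}$\Rightarrow$\ref{itemGdelta} via the complementary $F_\sigma$ presentation (closures of the unions of bags on the first $n$ levels), which sidesteps the question of whether any vertices survive in $\bigcap_n W_n$, but the idea is the same.

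For \ref{itemGdelta}$\Rightarrow$\ref{itm:main:Psi:i}, however, your plan diverges from the paper and, as you yourself concede, does not close. The paper does \emph{not} first produce an arbitrary displaying tree-decomposition and then tidy it \`a la K{\v r}{\'i}{\v z}--Thomas. Instead it builds the tree-decomposition directly from a presentation $V(G)\cup(\Omega(G)\setminus\Psi)=\bigcup_n X_n$ with the $X_n$ closed and increasing, level by level: given the current $G^i$ and a component $C$ of $G-G^i$, one first runs a ``nicest region'' algorithm (Algorithm~\ref{algo:RegionAlgo}) inside $G[V(C)\cup N(C)]$ on input $X=N(C)$, which outputs a nested family of end-linked $(<|N(C)|)$-regions whose neighbourhoods are already linked back to $N(C)$; one then takes as the new bag an \emph{envelope} (in the sense of \S\ref{sec:Envelopes}) of the relevant piece of $X_i$ together with $N(C)$, chosen so as to avoid all those regions. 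Linkedness is thus engineered during the construction rather than achieved by after-the-fact surgery; a single contraction step at the end removes those edges whose adhesion set is not linked to any end living above them.

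The obstacle you isolate --- making a transfinite tidying process converge without a global width bound and without a normal spanning tree of $G$ --- is genuine, and your proposed fix (use the decomposition tree $T$ as backbone, order surgeries by separator size and depth in $T$) is not a proof: each surgery alters $T$, so ``depth in $T$'' is not a stable invariant, and there is no a priori bound on adhesion sizes to substitute for it. You also flag, but do not resolve, the danger that the limit tree acquires new ends capturing ends outside $\Psi$. The paper circumvents all of this precisely by never performing surgery on a pre-existing decomposition: the algorithm guarantees from the outset (Observation~\ref{observation:algoLinkedEnds} together with property~\ref{Lproperty:pathLikeRegions} of Lemma~\ref{theorem:displayPrescribed}) that along every ray of $T$ there are cofinally many edges whose adhesion sets are already $\size$-minimal separators towards the corresponding end, which is exactly what makes the final contraction both safe for the displayed set and sufficient for linkedness.
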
  

A \defn{$G_{\delta}$-set} of a topological space is any set that can be written as a countable intersection of open sets. 
For example, the undominated ends form a $G_\delta$-set. To see this, fix in each component some vertex, let $U$ be their union, and consider the set $B_n(U)$ of all vertices within graph distance at most $n$ from a vertex in $U$. The closure of each $B_n(U)$ contains only ends that are dominated by a vertex in $B_n(U)$, and hence the set of undominated ends is precisely the complement of the countable union of closures of the balls $B_n(U)$, for $n \in \N$, see \cite{koloschin2023end}*{Theorem~10.1} for details.

This $G_\delta$-condition was originally isolated by Koloschin, Krill and Pitz~\cite{koloschin2023end}, who also established the equivalence of \ref{itm:main:Psi:ii} with \ref{itemGdelta}. However, we shall not rely on this result for the following reason:
It is obvious that \ref{itm:main:Psi:i} implies \ref{itm:main:Psi:ii}, and the implication \ref{itm:main:Psi:ii} implies \ref{itemGdelta} is also immediate: 
Given a set of ends $\Psi$ in a graph $G$ for which there exists a rooted \td\ $(T,\cV)$ of finite adhesion that displays $\Psi$, then considering the set $V_n$ of all vertices in $G$ which lie in a bag $V_t$ for a node $t$ on the first $n$ levels of $T$, we see that the set $\Psi$ is precisely the complement of the countable union of the closed sets $\overline{V_n}$, and hence a $G_\delta$-set. 
Thus, our argument in this paper will focus on the implication \ref{itemGdelta} implies \ref{itm:main:Psi:i}. We give a proof sketch below, but before doing so let us mention two consequences of our theorem concerning the end-structure of $\Psi$:

As indicated above, any linked \td s~of finite adhesion that is tight and componental conveys all combinatorial information about the ends in $\Psi$: their degree, their sets of dominating vertices, and hence their combined degrees, \cite{LinkedTDInfGraphs}*{Lemma~3.2 and~3.3}.
Thus, \cref{main:LinkedTD:Psi} immediately yields the following corollaries:

\begin{maincorollary} \label{maincor:DisplayingEndDegrees1}
    Let $G$ be a graph. Then for every set of ends $\Psi \subseteq \Omega(G)$ that is $G_\delta$ in $|G|$, there exists a \td\ of finite adhesion that homeomorphically displays all ends of $\Psi$, their dominating vertices, and their combined degrees.
\end{maincorollary}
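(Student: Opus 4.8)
The plan is to obtain this as a direct consequence of \cref{main:LinkedTD:Psi}. Since $\Psi$ is $G_\delta$ in $|G|$, that is, condition~\ref{itemGdelta} holds, the theorem furnishes a rooted \td\ $(T,\cV)$ of $G$ of finite adhesion that is linked, tight and componental and displays $\Psi$. So the whole content of the corollary reduces to a single assertion: any rooted \td\ of finite adhesion that is, in addition, linked, tight and componental automatically records, for every $\omega \in \Psi$, its degree, its set of dominating vertices, and hence its combined degree, and moreover displays $\Psi$ homeomorphically. No new ideas beyond the theorem are needed.

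For the combinatorial part I would invoke the structural analysis of such decompositions from \cite{LinkedTDInfGraphs}*{Lemmas~3.2 and~3.3}: along the ray $R_\omega \subseteq T$ converging to the end of $T$ in which $\omega$ lives, linkedness forces the adhesion sets to realise the corresponding Menger values while tightness prevents them from shrinking, so their common eventual size is exactly $\deg(\omega)$; and a vertex dominates $\omega$ precisely if it lies in $V_t$ for every $t \in R_\omega$. Once $\deg(\omega)$ and the dominating set of $\omega$ are visible in $(T,\cV)$, so is the combined degree $\Delta(\omega)$, since the latter is determined by these two data. For the topological part, ``$(T,\cV)$ displays $\Psi$'' already gives a continuous bijection from $\Psi$ onto the ends of $T$; since the adhesion is finite, the basic open neighbourhoods of an end of $T$ (the ``far'' sides of edges $tt'$ of $R_\omega$, separated in $G$ by the finite adhesion sets $V_t \cap V_{t'}$) correspond to basic open neighbourhoods of $\omega$ in $|G|$, and componentality and tightness ensure that these far sides are connected and contain exactly the expected ends, so the bijection is in fact a homeomorphism.

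Since the hard work is entirely contained in \cref{main:LinkedTD:Psi}, the only genuine obstacle here is bookkeeping: one has to check that the meaning of ``displays $\Psi$'' used in \cref{main:LinkedTD:Psi} coincides with the one underlying \cite{LinkedTDInfGraphs}*{Lemmas~3.2 and~3.3}, so that those lemmas apply verbatim, and confirm that ``combined degree'' is indeed a function of the degree together with the set of dominating vertices. Modulo matching up these definitions the corollary is immediate.
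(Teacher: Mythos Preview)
Your proposal is correct and follows essentially the same approach as the paper: apply \cref{main:LinkedTD:Psi} to obtain a linked, tight, componental rooted \td\ of finite adhesion displaying~$\Psi$, then invoke \cite{LinkedTDInfGraphs}*{Lemmas~3.2 and~3.3} (packaged in the paper as \cref{lem:LinkedTightCompDisplaysDomAndDegrees}) for dominating vertices and combined degrees, and \cref{lem:compDisplaysBoundary} for the homeomorphic display. Two small slips in your informal summary: the eventual adhesion size along $R_\omega$ equals the \emph{combined} degree $\Delta(\omega)$, not $\deg(\omega)$, and domination corresponds to lying in all but finitely many adhesion sets (a $\liminf$), not in every~$V_t$.
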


Specifically, for $\Psi$ the set of undominated ends, we obtain the following sharpening of Carmesin's theorem:

\begin{maincorollary} \label{maincor:DisplayingEndDegrees2}
    Every graph $G$ admits a \td\ of finite adhesion that homeomorphically displays all the undominated ends of $G$ and their degrees, so that for every $t \in T$ and every ray $R \subset T$ starting in $t$, there is $t' \in R$ with $V_{t} \cap V_{t'} = \emptyset$. 
\end{maincorollary}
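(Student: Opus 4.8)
The plan is to obtain \cref{maincor:DisplayingEndDegrees2} as a direct consequence of \cref{main:LinkedTD:Psi}, applied to the set $\Psi$ of \emph{all} undominated ends of $G$, together with one short additional argument for the last clause. First I would recall, exactly as explained after \cref{main:LinkedTD:Psi}, that $\Psi$ is $G_\delta$ in $|G|$; hence condition~\ref{itemGdelta} holds, and \cref{main:LinkedTD:Psi} yields a linked, tight and componental rooted \td\ $(T,\cV)$ of finite adhesion displaying $\Psi$. Then, just as in \cref{maincor:DisplayingEndDegrees1}, \cite{LinkedTDInfGraphs}*{Lemma~3.2 and~3.3} tells us that $(T,\cV)$ homeomorphically displays every end of $\Psi$ together with its set of dominating vertices and its combined degree; since the ends in $\Psi$ are undominated, those dominating-vertex sets are empty, so the combined degrees coincide with the degrees. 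This already delivers a \td\ of finite adhesion that homeomorphically displays all undominated ends of $G$ and their degrees.

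It remains to verify the extra property that for every $t\in T$ and every ray $R=t_0t_1t_2\cdots\subset T$ with $t_0=t$ there is some $t'\in R$ with $V_t\cap V_{t'}=\emptyset$. I would argue by contradiction. Suppose $V_{t_0}\cap V_{t_i}\neq\emptyset$ for all $i\ge 1$. Since $R$ is a path, $t_j$ lies on the $t_0$--$t_i$ path in $T$ whenever $1\le j\le i$, so $V_{t_0}\cap V_{t_i}\subseteq V_{t_0}\cap V_{t_j}$; taking $j=1$ shows that each $V_{t_0}\cap V_{t_i}$ is contained in the finite adhesion set $V_{t_0}\cap V_{t_1}$, and taking $j=i-1$ shows that $(V_{t_0}\cap V_{t_i})_{i\ge 1}$ is a descending chain of nonempty finite sets. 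Hence this chain has nonempty intersection, and any vertex $v$ in it lies in every bag $V_{t_i}$ along $R$.

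Finally, since $(T,\cV)$ conveys the sets of dominating vertices of the ends in $\Psi$ \cite{LinkedTDInfGraphs}*{Lemma~3.3}, the vertex $v$ — lying in all bags along $R$ — must dominate the end $\omega\in\Psi$ that corresponds to $R$ under the displaying, contradicting the undominatedness of $\omega$. (If one prefers a self-contained argument at this point: $\omega$ has a ray whose tails run into the subtree of $T$ above $t_i$ for every $i$, and by componentality the subgraph induced on the bags of each such subtree is connected and contains $v$, which suffices to build a fan from $v$ to $\omega$.) I do not expect a genuine obstacle here: the substance lies entirely in \cref{main:LinkedTD:Psi} and the cited structural lemmas of \cite{LinkedTDInfGraphs}, and the only thing not already packaged in \cref{maincor:DisplayingEndDegrees1} is the elementary compactness step of the middle paragraph combined with the (known) fact that a vertex in every bag along a ray dominates the displayed end — so the "hard part", such as it is, is merely pinning down that last implication via \cite{LinkedTDInfGraphs}*{Lemma~3.3}.
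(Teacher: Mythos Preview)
Your proposal is correct and follows essentially the same route as the paper: apply \cref{main:LinkedTD:Psi} (via \cref{maincor:DisplayingEndDegrees1}) to the $G_\delta$-set of undominated ends, then use that the resulting \td\ displays dominating vertices to derive the last clause. The paper leaves the final implication (``this implies $V_t\cap V_{t'}=\emptyset$ for some $t'\in R$'') as a one-line assertion, whereas you spell out the descending-chain/compactness step and the appeal to $\liminf V_e=\Dom(\eps)=\emptyset$ explicitly; the only cosmetic slip is that the dominating-vertex statement is \cite{LinkedTDInfGraphs}*{Lemma~3.2}, not~3.3.
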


Finally, Albrechtsen, Jacobs, Knappe and Pitz \cite{ExamplesLinkedTDInfGraphs} gave a number of examples showing that the property `linked' in \cref{main:LinkedTD:Psi}~\ref{itm:main:Psi:i} can neither be strengthened to `lean' \cite{ExamplesLinkedTDInfGraphs}*{Example~1} nor to its `unrooted' version that required the `linked' property between every pair of nodes and not just comparable ones \cite{ExamplesLinkedTDInfGraphs}*{Example~2}. 
Furthermore, we cannot improve the \td\ in \cref{maincor:DisplayingEndDegrees2} to have `upwards disjoint adhesion sets' \cite{ExamplesLinkedTDInfGraphs}*{Example~5.2}.

However, one might hope to improve the \td\ from \ref{itm:main:Psi:i} of \cref{main:LinkedTD:Psi} to have \emph{connected parts}, i.e.\ for every node $t \in T$ the part $G[V_t]$ is connected. Indeed, Koloschin, Krill and Pitz \cite{koloschin2023end}*{Theorem~7.1} showed that if a set of ends $\Psi$ can be displayed by a \td\ of~$G$ of finite adhesion, then $\Psi$ can also be displayed by a \td\ of finite adhesion with connected parts.
However, it is in general not possible to additionally require connected parts in \ref{itm:main:Psi:i} of \cref{main:LinkedTD:Psi} (see \arXivOrNot{\cref{Appendix:counterexampleConnectedParts}}{the appendix of the arXiv version of this paper} for details).

\section{Sketch of the proof} \label{section:ProofSketch}

In this section we give a sketch of the proof of the remaining implication in \cref{main:LinkedTD:Psi}. Any undefined terminology can be found in the preliminaries section,  \cref{section:Background}.

Now to prove that \ref{itemGdelta} implies \ref{itm:main:Psi:i}, let $G$ be a graph, and let $\Psi$ be some non-empty set of ends of $G$ that is $G_\delta$ in $|G|$. Written out, this means that there exists a sequence $X_1 \subseteq X_2 \subseteq \ldots$ of closed subsets of $V(G) \cup \Omega(G)$ such that $\bigcup_{n \in \N} X_n = V(G) \cup \Xi$ where $\Xi := \Omega(G) \setminus \Psi$. 
\medskip

First, in order to construct a tight and componental \td\ of~$G$ displaying $\Psi$ (which need not be linked) one can follow the strategy as Koloschin, Krill and Pitz~\cite{koloschin2023end}: The idea is to construct the \td\ level-by-level, where in each step one has a \td\ $(T^i, \cV)$ of some induced subgraph $G^i \subseteq G$ that already covers $X_i$, and then extend it to a \td\ $(T^{i+1}, \cV)$ which covers $X_{i+1}$, so that $T^i \subseteq T^{i+1}$. To make the \td\ componental, one adds for every component of $G-G^i$ a new node $t_C$, and makes $t_C$ adjacent to a node of $T^i$ containing $N(C)$ (assuming we ensured in step $i$ that such a node exists). To guarantee that $(T^{i+1}, \cV)$ covers $X_{i+1}$, every bag $V_{t_C}$ associated with a newly added node $t_C$ should contain $V(X_{i+1}) \cap V(C)$. However, if we define $V_{t_C} := V(X_{i+1}) \cap V(C)$, then in the next step, the components of $G-G^{i+1}$ might have infinite neighbourhood in $G^{i+1}$, and thus we will not achieve finite adhesion.

To solve this problem, Koloschin, Krill and Pitz showed that every closed set $X$ of vertices and ends of $G$ has an \defn{envelope}: a set $X^* \supseteq V(X)$ of vertices of $G$ whose boundary contains precisely the ends in $X$, and such that every component of $G-X^*$ has only finitely many neighbours in~$X^*$ (see \cref{sec:Envelopes}).  
Then letting $V_{t_C}$ be some envelope of $V(X_{i+1}) \cap V(C)$, we obtain a tight and componental rooted \td\ $(T, \cV)$ of finite adhesion. Moreover, the closures of its bags contain precisely the ends in $\Xi$ as $\bigcup_{n \in \N} X_n = V(G) \cup \Xi$ and every bag is an envelope of a subset of some $X_i$. Hence, by \cref{lem:compDisplaysBoundary}, $(T, \cV)$ displays $\Psi$.
\medskip

While this construction already achieves most properties that we need for \cref{main:LinkedTD:Psi}, it fails to ensure linkedness. Indeed, the construction only guarantees that the adhesion sets of $(T, 
\cV)$ are finite, but it does not choose the adhesion sets in any sense minimal; in fact, all the minimal separators along a ray in $T$ could be hidden inside the bags in $\cV$. To solve this problem, we employ a (variant of an) algorithm developed by Albrechtsen, Jacobs, Knappe and Pitz~\cite{LinkedTDInfGraphs}. Essentially, given a finite set $X$ of vertices of a graph $G$, this algorithm outputs a star $\sigma$ of separations of order $<|X|$ such that $X \subseteq \interior(\sigma)$ and such that every end in $\Psi$ which can be separated from $X$ by a separation of order $<|X|$ is separated from $X$ by a separation in $\sigma$ (see \cref{section:Algorithm} for the details). 
\medskip

\begin{figure}[ht]
    \centering
    \includegraphics[width=0.7\linewidth]{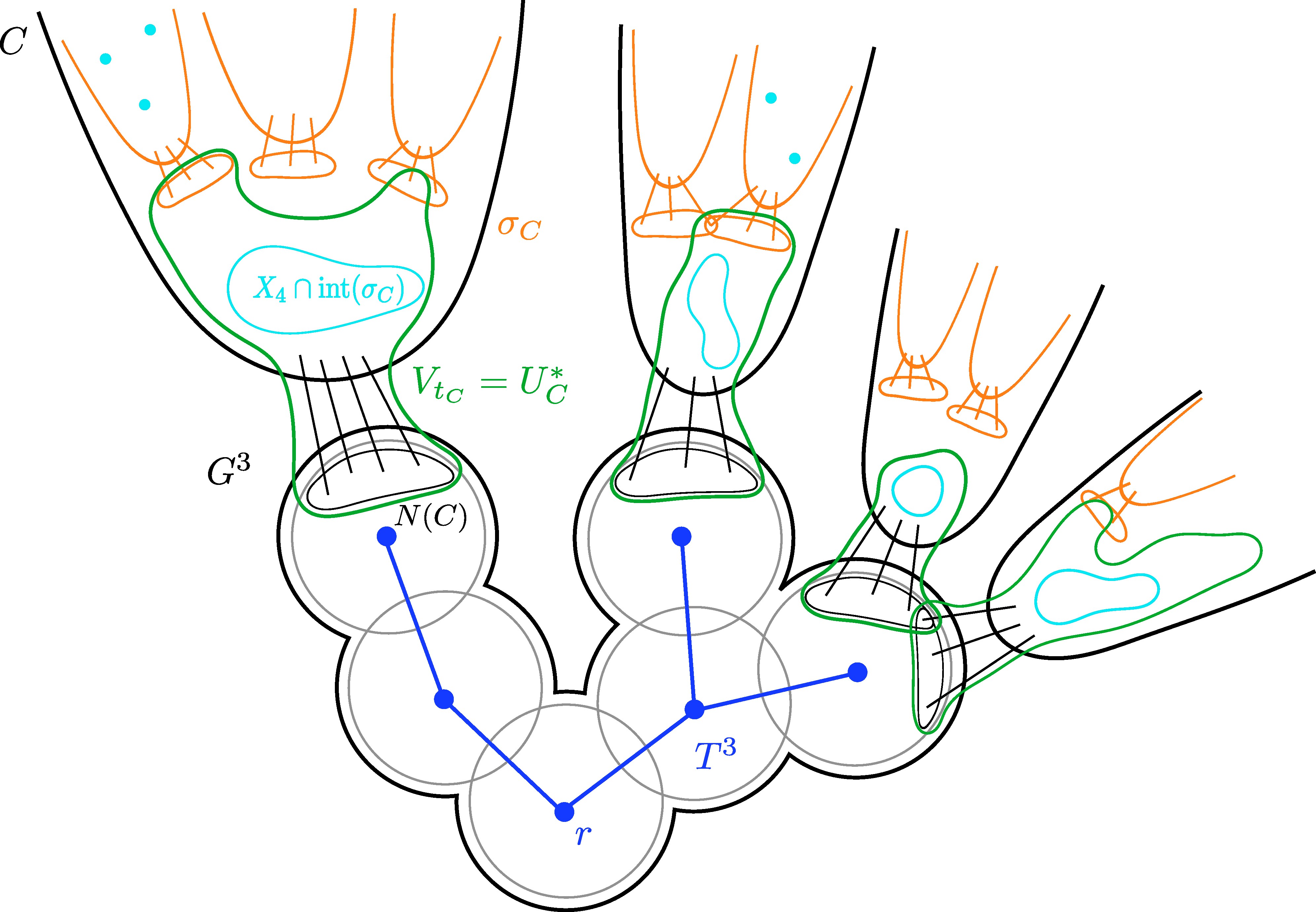}
    \caption{Depicted is a step of the recursive construction of the \td\ in the proof of \cref{main:LinkedTD:Psi}.}
    \label{fig:Sketch:TreeDecomp}
\end{figure}

To construct the desired \td\ for \cref{theorem:displayPrescribed}, we will follow the same strategy as described above, except that given $(T^i, \cV)$ and a component $C$ of $G-G^i$, we first run the aforementioned algorithm in $G[V(C) \cup N(C)]$ on input $X := N(C) \subseteq V(G^i)$ to obtain a star of finite order separations $\sigma_C$ as above; then, still working inside $G[V(C) \cup N(C)]$, we let $V_{t_C}$ be an envelope $U_C^*$ of $U_C$ together with $N(C)$ where $U_C$ is the union of $X_{i+1} \cap \interior(\sigma)$ together with the separators $A \cap B$ of all separations $(A,B) \in \sigma$ whose side $\cl(B)$ intersects $X_{i+1}$ non-emptily (see \cref{fig:Sketch:TreeDecomp}). By choosing the envelope carefully, we may arrange that $V_{t_C} = U_C^* \subseteq \interior(\sigma_C)$ (see \cref{sec:Envelopes}). This ensures 
that no separator $A \cap B$ of a separation $(A,B) \in \sigma_C$ gets hidden inside $V_{t_C}$. In the following steps, we ensure that any such separation $A \cap B$ will appear as an adhesion set in $(T, \cV)$ eventually, which will be the key to prove linkedness. See \cref{section:prescribedLinked} for details.

The arising \td\ $(T, \cV)$ is tight and componental, and it displays the ends in $\Psi$ and their combined degrees. However, $(T, \cV)$ might not yet be linked, as some edges corresponding to adhesion sets which arose from an envelope (instead of some star $\sigma_C$) might violate the linkedness property. The final step our construction is then to contract all those edges, which will yield the desired \td, which is achieved in the final page of \cref{section:prescribedLinked}.

\section{Preliminaries}\label{section:Background}

For graph-theoretical notation we closely follow \cite{bibel}.
When we refer to the natural numbers we do not include $0$. We denote the set of natural numbers including $0$ by $\mathbb{N}_0$. In what follows we recall notions that we use throughout this paper.

\subsection{Ends}
Let $G$ be a graph. A $1$-way infinite path in $G$ is called a \defn{ray}, and its $1$-way infinite subpaths are called its \defn{tails}. 
An \defn{end} of $G$ is an equivalence class of rays where two rays are equivalent if no finite set of vertices separates their underlying vertex sets. 
The set of all ends of $G$ is denoted by \defn{$\Omega(G)$}. For every end $\eps \in \Omega(G)$ and every finite set $S \subseteq V(G)$, there is a unique component of $G-S$ that contains rays from $\eps$. We denote this unique component of $G-S$ by \defn{$C(S,\eps)$} and say that $\eps$ \defn{lives} in the component $C(S, \eps)$. Moreover, we sometimes write \defn{$C_G(S,\eps)$} to emphasize that $C_G(S, \eps)$ is a component of $G-S$. This will be useful when there are multiple graphs in the picture. 

Let $X \subseteq V(G)$ be a set of vertices and $\eps$ an end of $G$. A finite set $S \subseteq V(G)$ is an \defn{$X$--$\eps$ separator} if $C(S, \eps)$ is disjoint from $X$.
A vertex $x$ of $G$ \defn{dominates} an end $\eps$ of $G$ if there exists no finite $x$--$\eps$ separator that does not include $x$. Equivalently, for every ray $R$ of $\eps$ there exists a subdivided star attached to $V(R)$ with center $x$.
We denote by \defn{$\Dom(\eps)$} the set of all vertices of $G$ that dominate $\eps$. An end of $G$ that is not dominated by some vertex is called \defn{undominated}. 

The \defn{degree} of an end is the supremum over all cardinalities of families of pairwise disjoint rays in that end. Halin \cite{halin65}*{Satz~1} sowed that this supremum is always attained. We denote the degree of an end $\eps$ by \defn{$\deg(\eps)$}. The \defn{combined degree} of an end $\eps$ is defined as the sum $\deg(\eps) + |\Dom(\eps)|$. One standard result is the following.
\begin{lemma}\label{lem:uncountableEndInfDom}\cite{PolatEME1}*{Proposition~2.11}
    Let $\eps$ be an end of uncountable degree. Then $\Dom(\eps)$ is infinite.
\end{lemma}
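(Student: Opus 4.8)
The plan is to argue by contradiction: assume $\eps$ has uncountable degree but $\Dom(\eps)$ is finite, and derive a contradiction in three steps. First I would \emph{reduce to the undominated case}. Set $D := \Dom(\eps)$, a finite set, and $G' := G - D$. Deleting $D$ turns every ray of $\eps$ into a ray with a tail in $G'$, and one checks that all these tails lie in a single end $\eps'$ of $G'$, that $\deg_{G'}(\eps')$ still equals $\deg_G(\eps)$ and is thus uncountable, and that $\eps'$ is \emph{undominated} in $G'$ --- for if some $v\in V(G')$ dominated $\eps'$ in $G'$, then adding $D$ back to any finite set would show that $v$ is not separated from $\eps$ in $G$ either, so $v$ would dominate $\eps$, forcing $v\in D$, a contradiction. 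Hence we may assume outright that $\eps$ is undominated, and I continue to write $G$ and $\eps$.

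Next I would \emph{build a countable barrier} for $\eps$. Fix one vertex in each component of $G$, let $U$ be the set of these vertices, and for $n\in\N$ let $B_n$ be the set of vertices at graph distance at most $n$ from $U$; note $\bigcup_n B_n = V(G)$. By the observation recalled in the introduction, the closure $\overline{B_n}$ in $|G|$ contains only ends dominated by a vertex of $B_n$; since $\eps$ is undominated, $\eps\notin\overline{B_n}$, so some finite $S_n\subseteq V(G)$ satisfies $V(C(S_n,\eps))\cap B_n=\emptyset$. Replacing $S_n$ by $S_1\cup\dots\cup S_n$ I may assume $S_1\subseteq S_2\subseteq\cdots$, so that the components $C(S_n,\eps)$ are nested decreasing while still $V(C(S_n,\eps))\cap B_n=\emptyset$; since $\bigcup_n B_n = V(G)$ this yields $\bigcap_n V(C(S_n,\eps))=\emptyset$. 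Put $S^*:=\bigcup_n S_n$, a countable set.

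Finally I would \emph{derive the contradiction} by showing that every ray of $\eps$ meets $S^*$. Given a ray $R=v_0v_1v_2\cdots$ in $\eps$, for each $n$ it has a tail in $C(S_n,\eps)$; let $t_n$ be the least index with $v_{t_n},v_{t_n+1},\dots\in V(C(S_n,\eps))$. The sequence $(t_n)$ is non-decreasing by the nesting, and it is unbounded: otherwise it is eventually constant, say equal to $t$, so the tail $v_tv_{t+1}\cdots$ lies in $\bigcap_n V(C(S_n,\eps))=\emptyset$, which is absurd. Hence some $t_n\ge 1$, and for the least such $n$ the vertex $v_{t_n-1}$ lies outside $C(S_n,\eps)$ by minimality of $t_n$ but is adjacent to $v_{t_n}\in C(S_n,\eps)$, so $v_{t_n-1}\in S_n\subseteq S^*$. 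Thus $R$ meets $S^*$. Now, since $\deg(\eps)$ is uncountable and the supremum defining it is attained, there are $\aleph_1$ pairwise disjoint rays in $\eps$; each meets $S^*$, so choosing one vertex of $S^*$ on each of them injects $\aleph_1$ into the countable set $S^*$ --- a contradiction. Therefore $\deg(\eps)$ is countable, against our assumption, and so $\Dom(\eps)$ must be infinite.

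I expect the main obstacle to be the barrier step: because $G$ may have uncountably many vertices, one cannot separate them from $\eps$ one at a time, and the role of the ``$\overline{B_n}$ contains only dominated ends'' fact is precisely to manufacture a single \emph{countable} family of finite separators whose regions shrink to $\emptyset$. The reduction step is routine but genuinely needed, since this barrier argument only applies once $\eps$ has been made undominated.
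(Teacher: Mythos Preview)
Your argument is correct. Note, however, that the paper does not supply its own proof of this lemma: it is quoted as Polat's result \cite{PolatEME1}*{Proposition~2.11} without argument, so there is nothing in the paper to compare your approach against. Your route---reducing to the undominated case, then invoking the fact from the introduction that $\overline{B_n}$ contains only dominated ends to produce a countable barrier $S^*$, and finally pigeonholing $\aleph_1$ disjoint rays into $S^*$---is sound and pleasantly self-contained modulo that cited fact about $\overline{B_n}$.
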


Let $X$ and $Y$ be two sets of vertices in a graph $G$. A set of vertices or subgraph $H$ in $G$ \defn{meets} $X$ if $H$ contains a vertex of $X$. We call a path $P$ an \defn{$X$--$Y$ path} if $P$ meets $X$ exactly in its first vertex and $Y$ exactly in its last vertex. We say $X$ is \defn{linked to} $Y$ if there exists a family $\{ P_x \colon x \in X \}$ of pairwise disjoint $X$--$Y$ paths $P_x$ such that $x \in P_x$ for every $x \in X$. 
Similarly, we define when a set $X$ is \defn{linked to} an end $\eps$ of $G$: 
An \defn{$X$--$\eps$ ray} is any ray in $\eps$ that starts in $X$, and an \defn{$X$--$\eps$ path} is any $X$--$\Dom(\eps)$ path in $G$. The set $X$ is \defn{linked to} $\eps$ if there exists a family $\{P_x \colon x \in X \}$ of pairwise disjoint $X$--$\eps$ paths and rays $P_x$ such that $x \in P_x$ for every $x \in X$. 
\subsection{Topological notions}
Let $G$ be a graph. 
Given an end $\eps \in \Omega(G)$ and a finite set $S \subseteq V(G)$, let \defn{$\Omega(S,\eps)$} be the set of all ends $\psi$ of $G$ for which $C(S, \eps) = C(S, \psi)$. We define \defn{$\hat{C}(S, \eps)$} as the union $C(S, \eps) \cup \Omega(S, \eps)$. 
The collection of all singletons $\{ v \}$ for $v \in V(G)$ together with all sets $\hat{C}(S, \eps)$ for $\eps \in \Omega(G)$ and finite $S \subseteq V(G)$ forms the basis for a topology on the space $|G| = V(G) \cup \Omega(G)$.\footnote{Often, the edges of $G$ are also included in the topological space $|G|$. For our purposes, this is not necessary; the property of being a $G_\delta$ sets of ends is independent of the inclusion of edges \cite[Lemma~2.5]{koloschin2023end}.}

Throughout this paper we only consider the above defined topology when talking about the boundary or closure of sets of vertices and ends in some graph $G$. Moreover, when we refer to a set of vertices and ends in some graph as closed or open, we mean that the set is closed or open with respect to the above defined topology in its underlying graph. We denote the closure of some set $X \subseteq V(G) \cup \Omega(G)$ with respect to our topology in some graph $G$ by \defn{$\cl_G(X)$} or \defn{$\cl(X)$} and the boundary of $X$ by \defn{$\partial_G(X)$} or \defn{$\partial(X)$} if the graph~$G$ is clear from the context. Let us recall that the boundary is defined as $\cl(X) \setminus \interior(X)$, where the interior of $X$, denoted by \defn{$\interior(X)$}, is the union of all subsets of $X$ that are open in $G$. As such, the interior of any set $X \subseteq V(G)$ equals $X$ and the boundary only ever contains ends. Moreover, it is straightforward to check that the boundary of some set $X \subseteq V(G)$ of vertices consists of exactly those ends $\eps$ of $G$ for which there exists no finite $X$--$\eps$ separator. Also, an end $\eps$ lies in the boundary of a set $X \subseteq V(G)$ of vertices if and only if there exists a \textit{comb attached to $X$ with spine in $\eps$}. A \defn{comb} $H$ is the union of a ray $R$ together with infinitely many disjoint paths, who have exactly their first vertex on $R$. We call the last vertices of these paths the \defn{teeth} of $H$ and the underlying ray the \defn{spine} of $H$. The paths are allowed to be trivial and as such the teeth are not necessary disjoint to $R$. A \defn{comb is attached} to a set $X$ if all its teeth are contained in $X$.

\subsection{Separations}
An (oriented) \defn{separation} of a graph $G$ is an ordered pair $(A,B)$ of sets of vertices $A,B \subseteq V(G)$ such that $A \cup B = V(G)$ and there is no edge in $G$ with one endpoint in $A \setminus B$ and its other in $B \setminus A$. We call $A$ and $B$ the \defn{sides} of $(A,B)$, where $A$ is the $\defn{left side}$, $B$ the $\defn{right side}$ and $A \setminus B$ the \defn{strict left side} and $B \setminus A$ the \defn{strict right side} of $(A,B)$. We refer to $A \cap B$ as the \defn{separator} of $(A,B)$ and $| A \cap B|$ as the \defn{order} of $(A,B)$. A set $S$ of separations is of \defn{finite order} if all separations in $S$ have finite order. 

A \defn{star} $\sigma$ (of separations) is a set of separations of $G$ such that for any two separations $(A,B), (C,D) \in \sigma$ it holds that $A \subseteq D$ and $B \supseteq C$. Its \defn{interior}, denoted by \defn{$\is$}, is $\is := \bigcap \{B \colon (A,B) \in \sigma\}$.
A star $\sigma$ is \defn{left-tight} if for all $(A,B) \in \sigma$ there exists a component $C$ of $G[A \setminus B]$ with $N_G(C) = A \cap B$ and \defn{left-componental} if $G[A \setminus B]$ is connected for all $(A,B) \in \sigma$.

\subsection{Tree-decompositions}

Let $T$ be a tree. We write $t \in T$ or $e \in T$ if its clear that $t$ is a node or $e$ an edge of $T$. A tree $T$ is \defn{rooted} if it contains a special node $r \in T$, its \defn{root}, denoted by \defn{$\rt(T)$}. A rooted tree $T$ has a natural partial order \defn{$<_T$} on $V(T) \cup E(T)$, its \defn{tree order}, which depends on $r = \rt(T)$: for given nodes or edges $x,y \in  V(T) \cup E(T)$, we write $\defn{xTy}$ for the unique $\subseteq$-minimal path in $T$ which contains $x$ and $y$, and we then set $x \leq_T y$ if $x \in rTy$. 

In a rooted tree, a \defn{leaf} is any maximal node in the tree-order $\leq_T$.
The \defn{height} of a given node $t \in T$ is its distance from the root. We denote the height of $t$ by \defn{$\tht(t)$}. 
When given a ray $R$, let \defn{$<_R$} be the tree order of the rooted tree $R$ rooted at its first vertex.
\smallskip

A \defn{(rooted) \td} of a graph $G$ is a pair $(T, \cV)$, where $T$ is a (rooted) tree, its \defn{decomposition tree}, and $\cV = (V_t \colon t \in T)$ a family of subsets of $V(G)$, such that 
\begin{enumerate}[label=(T\arabic*)]
    \item \label{defn:T1} $G = \bigcup_{t \in T} G[V_t]$, and
    \item \label{defn:T3} for any $v \in V(G)$, the subgraph of $T$ induced by $\{ t \in V(T) \colon v \in V_t \}$ is connected.
\end{enumerate}

The sets $V_t$ are the \defn{bags} of $(T, \cV)$, the induced subgraphs $G[V_t]$ are its \defn{parts}, and $V_e := V_x \cap V_y$, for $e=xy \in T$, are the \defn{adhesion sets}.
A \td\ has \defn{finite adhesion} if all its adhesion sets are finite.

A \td\ is essentially a decomposition of the original graph into induced subgraphs that reflects the tree-like structure of $T$: 
Given any edge $e=xy \in T$ there exist precisely two components of $T-e$. Let $A$ be the component of $T-e$ containing $x$ and $B$ the component of $T-e$ containing $y$. Setting $U^e_x := \bigcup_{t \in A} V_t$ and $U^e_y := \bigcup_{t \in B} V_t$ 
it follows that $(U^e_x, U^e_y)$ is a separation of $G$ with separator $V_e$ \cite{bibel}*{Lemma~12.3.1}. 

If $(T, \cV)$ is a rooted \td, and $e = xy \in T$ an edge with $x <_T y$, then we call $\defnm{G \up e} := G[U^e_y]$ the \defn{part above $e$} and $\defnm{G \strictup e} := G[U^e_y \setminus V_e]$ the \defn{part strictly above $e$}.

A rooted \td\ $(T, \cV)$ of a graph $G$ is 
\begin{itemize}
    \item \defn{tight} if for every edge $e \in T$ there is a component $C$ of $G\strictup e$ with $N(C) = V_e$,
    \item \defn{componental} if for every edge $e \in T$ the part $G \strictup e$ strictly above $e$ is connected, and
    \item \defn{linked} if for every pair of comparable edges $e <_T f \in T$ there are $\min \{ |V_g| \colon g \in E(eTf) \}$ pairwise disjoint $V_e$--$V_f$ paths in $G$ \cite{thomas1989wqo}.
\end{itemize}

Let $(T,\cV)$ be a \td\ of a graph $G$ of finite adhesion, and let $\eps$ be an end of~$G$. Let edge $e=xy \in T$ be an edge of $T$, and let $(U^e_x, U^e_y)$ be the separation induced by $e$. As all adhesion sets of $(T, \cV)$ are finite, there is a unique side of $(U^e_x, U^e_y)$ that includes $C(V_e, \eps)$. Directing every edge of $T$ to the unique side including $C(V_e, \eps)$ gives rise to a direction on $T$. For any node $t \in T$, there can be at most one outwards directed edge with respect to this direction, as, for any two edges $e,f$ incident with some node $t \in T$, the component $C(V_e \cup V_f, \eps)$ must choose a side.
As such, either all edges point towards a unique node of $T$ or there exists a unique rooted ray of $T$ of which all edges are directed away from the root of $T$. In the latter case, we denote this unique ray of $T$ by \defn{$R_\eps$}. We encode this behavior by a map $\defnm{f_{\cT}}: \Omega(G) \longrightarrow V(T) \cup \Omega(T)$ which maps every end of $G$ to this unique vertex or end of $T$. We call $f_{\cT}^{-1}[\Omega(T)]$ the \defn{boundary} of $(T, \cV)$.
Let $\Psi \subseteq \Omega(G)$. We say that $(T, \cV)$

\begin{itemize}
\item  \defn{displays $\Psi$} if $\Psi$ is the boundary of $(T, \cV)$ and $f_{\cT}$ restricted to $\Psi$ is a bijection between $\Psi$ and $\Omega(T)$ \cite{carmesin2019displayingtopends},
\item \defn{displays $\Psi$ homeomorphically} if $\Psi$ is the boundary of $(T, \cV)$ and $f_{\cT}$ restricted to $\Psi$ is a homeomorphism between $\Psi$ and $\Omega(T)$ \cites{koloschin2023end},
\item \defn{displays all dominating vertices of $\Psi$} if $\liminf_{e \in E(R_\eps)} V_e = \Dom(\eps)$\footnote{The set-theoretic $\liminf_{n \in \N} A_n$ consists of all points that are contained in all but finitely many $A_n$. For a ray $R= v_0e_0v_1e_1v_1 \dots$ in $T$, one gets $\liminf_{e \in E(R_\eps)} V_e = \bigcup_{n \in \N} \bigcap_{i \geq n} V_{e_i}$.} for all $\eps \in \Psi$, and
\item \defn{displays all combined degrees of $\Psi$} if $\liminf_{e \in E(R_\eps)} |V_e| = \Delta(\eps)$ for all $\eps \in \Psi$. 
\end{itemize}


Roughly, every componental, rooted \td\ homeomorphically displays all the ends \cite{koloschin2023end}*{Lemma~3.1}, every tight such \td\ displays all dominating vertices (\cite{LinkedTDInfGraphs}*{Lemma~3.2}), and every such linked tight \td\ displays all combined degrees (\cite{LinkedTDInfGraphs}*{Lemma~3.3}).

\begin{lemma}\cite{koloschin2023end}*{Lemma~3.1} \label{lem:compDisplaysBoundary}
    Let $G$ be a graph. Every componental rooted \td\ of~$G$ of finite adhesion homeomorphically displays its boundary.
\end{lemma}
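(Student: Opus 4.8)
\textbf{Proof plan for \cref{lem:compDisplaysBoundary}.} The plan is to show that for a componental rooted \td\ $(T,\cV)$ of finite adhesion, the map $f_{\cT}$ restricted to the boundary $f_{\cT}^{-1}[\Omega(T)]$ is a homeomorphism onto $\Omega(T)$. I would split this into three parts: (a) every end $\eps$ with $f_{\cT}(\eps)\in\Omega(T)$ is sent to an end of $T$, and conversely every end of $T$ is hit (surjectivity onto $\Omega(T)$); (b) $f_{\cT}$ restricted to the boundary is injective; (c) $f_{\cT}$ and its inverse are continuous. Throughout I would use the correspondence between edges $e = xy \in T$ with $x <_T y$ and the induced separations $(U^e_x, U^e_y)$, together with the fact that the part strictly above $e$, $G\strictup e = G[U^e_y \setminus V_e]$, is connected by componentality.

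\textbf{Surjectivity and the reverse map.} First I would observe that if $R = t_0 e_0 t_1 e_1 \dots$ is a ray in $T$ rooted at $t_0$, then the parts strictly above the $e_i$ form a nested decreasing sequence of connected subgraphs $G\strictup e_0 \supseteq G\strictup e_1 \supseteq \dots$, each non-empty (otherwise \ref{defn:T1} and \ref{defn:T3} would force the ray to terminate in the decomposition, contradicting that $R$ is infinite — more precisely, if $G\strictup e_i$ is empty then $U^{e_i}_{t_{i+1}} = V_{e_i}$, and since every vertex lives in a connected subtree, no vertex can appear in $V_{t_j}$ for $j > i+1$ without appearing in $V_{e_i}$, which by finiteness of the adhesion sets and \ref{defn:T3} eventually empties all bags above — I would make this precise). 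Picking a vertex in $G\strictup e_i$ for each $i$ and connecting consecutive ones through the connected subgraph $G\strictup e_i$ yields a walk, hence a ray, visiting all $G\strictup e_i$; this ray defines an end $\eps$ of $G$ with $R_\eps = R$, so $f_{\cT}(\eps)$ is the end of $T$ corresponding to $R$. This gives surjectivity. For the reverse direction, an end $\eps$ with $f_{\cT}(\eps) \in \Omega(T)$ has a well-defined rooted ray $R_\eps$ by the discussion preceding the lemma, which determines the end of $T$ it maps to.

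\textbf{Injectivity and continuity.} For injectivity, suppose $\eps \neq \eps'$ both lie in the boundary and $f_{\cT}(\eps) = f_{\cT}(\eps')$, i.e.\ $R_\eps = R_{\eps'} = R$. Pick a finite $S \subseteq V(G)$ separating a ray of $\eps$ from a ray of $\eps'$; since $S$ is finite and lives in finitely many bags, there is an edge $e_i$ on $R$ high enough that $S \cap (U^{e_i}_{t_{i+1}} \setminus V_{e_i}) = \emptyset$, so $S$ is disjoint from the connected subgraph $G \strictup e_i$. But $G\strictup e_i$ contains a tail of a ray from $\eps$ and a tail of a ray from $\eps'$ (by definition of $R$ being $R_\eps$ and $R_{\eps'}$), and being connected and disjoint from $S$ it cannot separate them — contradiction. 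For continuity of $f_{\cT}$ restricted to the boundary: a basic open set around an end $\omega \in \Omega(T)$ corresponds to a subtree $T'$ hanging below some edge $e$, and its preimage is exactly the set of boundary ends $\eps$ with $C(V_e,\eps) \subseteq U^e_{t_{i+1}}$, which is the trace on the boundary of the basic open set $\hat C(V_e, \eps_0)$ of $|G|$ for suitable $\eps_0$ — hence open. Conversely, a basic open set $\hat C(S,\eps)$ of $|G|$, intersected with the boundary, maps to a union of subtree-ends of $T$ by the same separation correspondence once $S$ is pushed above a high enough level. Combining, $f_{\cT}$ restricted to the boundary is a continuous bijection with continuous inverse, i.e.\ a homeomorphism onto $\Omega(T)$, which is exactly what it means to homeomorphically display the boundary.

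\textbf{Main obstacle.} The step I expect to require the most care is the non-emptiness and nestedness argument in the surjectivity part: correctly ruling out that a ray of $T$ "dies out" in the decomposition (so that it genuinely corresponds to an end of $G$ rather than nothing), and verifying that the constructed ray really has $R_\eps$ equal to the given ray $R$ of $T$ and not a proper initial segment. This hinges on using \ref{defn:T3} together with finite adhesion carefully. The continuity bookkeeping is routine once the separation/subtree correspondence is set up cleanly, and injectivity is a short application of the connectedness of the parts strictly above edges.
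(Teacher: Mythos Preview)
The paper does not give its own proof of this lemma; it is quoted verbatim from \cite{koloschin2023end}*{Lemma~3.1} and used as a black box. Your outline is the natural one and is essentially correct, with one genuine slip.

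Your argument for the non-emptiness of the $G\strictup e_i$ does not work. You try to derive a contradiction from \ref{defn:T1} and \ref{defn:T3} by arguing that an empty $G\strictup e_i$ would ``force the ray to terminate'', but this is false: a tree-decomposition may perfectly well carry an infinite ray in $T$ along which all bags are eventually empty (or all equal to a fixed finite set) without violating any axiom, so no contradiction with ``$R$ is infinite'' arises. The correct one-line argument is that \emph{componental} asserts $G\strictup e$ is connected for every edge $e$, and under the standard convention the empty graph is not connected; hence each $G\strictup e_i$ is non-empty. This is precisely where the hypothesis `componental' enters the surjectivity part, and it is genuinely needed --- without it surjectivity onto $\Omega(T)$ can fail.

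Once non-emptiness is in place, the rest of your plan goes through. For surjectivity you should also note explicitly that $\bigcap_i V(G\strictup e_i)=\emptyset$ (this follows from \ref{defn:T3}: no node of $T$ lies above every $e_i$), so the walk you build visits each vertex only finitely often and therefore contains a ray with a tail in every $G\strictup e_i$. Injectivity is exactly as you say, and for the continuity of $f_\cT^{-1}$ the precise statement is: given $\eps$ in the boundary and finite $S$, choose $i$ large enough along $R_\eps$ that $S\cap V(G\strictup e_i)=\emptyset$; then $G\strictup e_i\subseteq C(S,\eps)$ by connectedness, so every end of $T$ above $e_i$ lies in the image of $\hat C(S,\eps)\cap\,\text{boundary}$.
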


\begin{lemma}\cite{LinkedTDInfGraphs}*{Lemma~3.2 \&~3.3} \label{lem:LinkedTightCompDisplaysDomAndDegrees}
    Let $G$ be a graph. Every linked, tight and componental rooted \td\ of $G$ of finite adhesion displays all dominating vertices and combined degrees of its boundary.
\end{lemma}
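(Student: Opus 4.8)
\textbf{Proof proposal for \cref{lem:LinkedTightCompDisplaysDomAndDegrees}.}

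The plan is to prove the two statements separately, reusing the hypotheses in slightly different ways. Throughout, let $(T,\cV)$ be a linked, tight and componental rooted \td\ of $G$ of finite adhesion, let $\Psi$ be its boundary, and fix $\eps \in \Psi$, so that $R_\eps = v_0 e_0 v_1 e_1 \dots$ is the unique rooted ray of $T$ along which all edges point. Write $A_n := V_{e_n}$ for the adhesion sets along $R_\eps$ and $A^\infty := \liminf_{n} A_n = \bigcup_{k}\bigcap_{n\ge k} A_n$. The first observation, used in both parts, is that since the parts $G\strictup{e_n}$ are connected, the sets $A_n$ shrink down to exactly $\Dom(\eps)$ in the following qualitative sense: each vertex of $\Dom(\eps)$ lies in all but finitely many $A_n$ (a dominating vertex cannot be separated from $\eps$ by any finite set avoiding it, so the separation $(U^{e_n}_{v_n}, U^{e_n}_{v_{n+1}})$ must place it in the separator once the part strictly above contains a ray of $\eps$ avoiding it), while any vertex in infinitely many $A_n$ sends, via the connectedness of the parts strictly above and \ref{defn:T3}, a comb to $V(R)$ for any ray $R$ of $\eps$ living above, hence dominates $\eps$. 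This is exactly the content of \cite{LinkedTDInfGraphs}*{Lemma~3.2}, so for the purposes of this proof I would simply invoke it: $\liminf_{e\in E(R_\eps)} V_e = \Dom(\eps)$, giving the first assertion.

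For the combined-degree statement I would show $A^\infty \le \liminf_n |A_n|$ and $\Delta(\eps) \le A^\infty$ where I abuse notation writing $A^\infty$ also for $|A^\infty|$, and conversely $\liminf_n|A_n| \le \Delta(\eps)$; since $|A^\infty| = |\Dom(\eps)|$ by the first part, it suffices to produce $\deg(\eps) + |\Dom(\eps)|$ disjoint paths/rays and, on the other side, bound $\liminf_n|A_n|$ above by $\Delta(\eps)$. The easy inequality is $\liminf_n |A_n| \le \Delta(\eps)$: for each $n$, the separator $A_n$ is a finite $V_{v_0}$--$C(A_n,\eps)$ separator, and standard end-degree theory (Halin, \cite{halin65}*{Satz~1}) gives that a finite separator $S$ with $\eps$ living in a component of $G-S$ satisfies $|S| \ge$ the number of disjoint rays in $\eps$ that can be routed through $S$ plus the number of dominating vertices trapped in $S$; taking $\liminf$ and using that every dominating vertex eventually lies in $A_n$ yields the bound. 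The substantial direction is the reverse, $\Delta(\eps) \le \liminf_n|A_n|$, and here is where linkedness enters: fix $k$ with $|A_n| = m := \liminf_j|A_j|$ for infinitely many $n \ge k$, pick such indices $k \le n_1 < n_2 < \dots$, and apply the linkedness hypothesis to the comparable edge pairs $e_{n_i} <_T e_{n_j}$ — since $m = \min\{|V_g| : g \in E(e_{n_i}Te_{n_j})\}$ along $R_\eps$ once we are past level $k$, linkedness hands us $m$ disjoint $A_{n_i}$--$A_{n_j}$ paths for every $i < j$. A compactness / König-type argument along these nested families (exactly as in the proof of the linked implies lean type results) then assembles $m$ disjoint rays-and-paths witnessing $\deg(\eps) + |\Dom(\eps)| \ge m$: the paths that stabilise through $\bigcap_{n\ge k}A_n$ become the subdivided-star "spokes" to the $|\Dom(\eps)|$ dominating vertices, and the remaining $m - |\Dom(\eps)|$ disjoint path-segments, concatenated through the nested separators, limit onto $m - |\Dom(\eps)|$ disjoint rays in $\eps$.

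The main obstacle I anticipate is the bookkeeping in this last compactness step: linkedness is only promised between comparable \emph{pairs}, so one gets, for each pair $(n_i,n_j)$, \emph{a} family of $m$ disjoint connecting paths, but a priori these families for different pairs need not be compatible (the path ending at a given vertex of $A_{n_j}$ in the $(n_i,n_j)$-family may start at a different vertex of $A_{n_i}$ than in the $(n_{i-1},n_i)$-family). The fix is the usual one: restrict to a subsequence of indices and, by a diagonal/König argument on the finite sets $A_{n_i}$ together with the finitely many "routing permutations," extract a coherent system of $m$ disjoint $A_{n_i}$--$A_{n_{i+1}}$ paths whose endpoints match up, then concatenate. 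Separating the $\Dom(\eps)$-part (vertices eventually in every $A_n$, contributing spokes by tightness/connectedness of the parts) from the genuinely "rayward" part of the limit system, and checking the resulting rays are pairwise disjoint and belong to $\eps$, is the delicate part; everything else is a direct appeal to \cref{lem:LinkedTightCompDisplaysDomAndDegrees}'s predecessors \cite{LinkedTDInfGraphs}*{Lemma~3.2} and to Halin's theorem on end degrees. In fact, since the statement being proved \emph{is} \cite{LinkedTDInfGraphs}*{Lemma~3.2 \& 3.3} verbatim, the cleanest "proof" in the present paper is simply to cite that source; I would include the sketch above only to make the excerpt self-contained.
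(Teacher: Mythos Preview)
The paper does not prove this lemma at all: it is stated with a citation to \cite{LinkedTDInfGraphs}*{Lemma~3.2 \& 3.3} and no argument is given. You correctly identify this at the end of your proposal, so in that sense your ``proof'' matches the paper's exactly --- a bare citation.

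Regarding the sketch you include for self-containedness: the overall strategy (tightness/componentality for the $\Dom(\eps)$ part, linkedness plus a K\H{o}nig-type compactness argument along a subsequence of adhesion sets of minimum size for the combined-degree part) is the right one and is indeed how the cited lemmas are proved. However, you have the two inequality labels swapped throughout the second paragraph. The argument you give under ``the easy inequality is $\liminf_n |A_n| \le \Delta(\eps)$'' actually shows $|A_n| \ge \Delta(\eps)$ for all large $n$ (each of a maximal disjoint family of rays and dominating vertices must meet $A_n$), hence $\liminf_n |A_n| \ge \Delta(\eps)$; and the ``substantial direction'' where you invoke linkedness to build $m = \liminf_n|A_n|$ disjoint rays-and-paths is precisely what establishes $\Delta(\eps) \ge \liminf_n|A_n|$. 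The mathematics under each label is correct; only the inequality signs attached to them are reversed. If you intend to keep the sketch, fix those two labels.
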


\subsection{Regions}
Let $G$ be a graph. A \defn{region} of $G$ is any connected induced subgraph of $G$. Let $k \in \mathbb{N}_0$. A \defn{$k$-region} of $G$ is any region $H$ of $G$ with $| N_G(H) | = k$. Moreover, a \defn{$\lk$-} or \defn{$\lek$-region} of $G$ is any $k'$-region $H$ of $G$ for some $k'$ with $k' < k$ or $k' \leq k$, respectively. Lastly, a \defn{$\lA$-region} of $G$ is any $k$-region of $G$ for $k \in \mathbb{N}_0$. All regions occuring in this paper will be $\lA$-regions.

Given an end $\eps$ of $G$, a $\lA$-region $H$ of $G$ is \defn{$\eps$-linked} if $\eps$ lives in $H$ and the neighbourhood $N_G(H)$ is linked to $\eps$. A $\lA$-region $H$ of $G$ is \defn{end-linked} if it is $\eps$-linked for some end $\eps$ of $G$.

Two regions $C$ and $D$ of $G$ \defn{touch} if they either share a vertex or there exists an edge $e$ in $G$ with one endpoint in $C$ and the other endpoint in $D$. Two regions $C$ and $D$ are \defn{nested} if they do not touch or $C \subseteq D$ or $D \subseteq C$.

\section{Envelopes} \label{sec:Envelopes}

Let $G$ be a graph. A set $X$ of vertices of $G$ has \defn{finite adhesion} in $G$ if $N(C)$ is finite for all components $C$ of $G-X$. An \defn{envelope} for a set $X \subseteq V(G) \cup \Omega(G)$ of vertices and ends is a set of vertices $X^* \supseteq X \cap V(G)$ of finite adhesion with $\partial(X^*)=\cl(X) \cap \Omega(G)$. Envelopes were originally developed by Kurkofka and Pitz \cite{kurkofka2024representationtheoremendspaces} only for subsets of $V(G)$ (see \cite{kurkofka2024representationtheoremendspaces}*{Theorem~3.2}). In this context, the second condition shortens to $\partial(X^*) = \partial(X)$. Koloschin, Krill and Pitz \cite{koloschin2023end} extended envelopes to sets of vertices and ends (see \cite{koloschin2023end}*{Theorem~5.1}). We need the following more detailed version of their theorem, which follows from \cite{koloschin2023end}*{Theorem~5.1 \& Claim~5.3}:

\begin{theorem} \label{thm:Envelope_lemma:Detailed}
    Any set $X$ consisting of vertices and ends in a graph $G$ admits an envelope $X^*$. 
    
    Moreover, one may choose $X^*$ so that for every 
    $(<\aleph_0)$-region $D$ of $G$ with $\cl(D) \cap X = \emptyset$, we have $V(D) \cap X^*$ is finite.
\end{theorem}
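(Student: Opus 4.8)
The plan is to bootstrap from the already-cited envelope theorem \cite{koloschin2023end}*{Theorem~5.1 \& Claim~5.3}, whose construction I will recall just enough to extract the extra property. Recall that the construction there produces $X^*$ as a nested union of "safe" finite extensions: one enumerates (or transfinitely recurses over) the relevant $(<\aleph_0)$-regions $C$ of $G-V(X)$ whose closure avoids $X$, and for each such $C$ one adds to the evolving set only a finite piece of $V(C)$ — just enough to ensure that $C$ is eventually cut off from the ends of $X$ while keeping finite adhesion. The key observation I want to isolate is that Claim~5.3 of \cite{koloschin2023end} already shows that each individual region $C$ with $\cl(C)\cap X=\emptyset$ contributes only finitely many vertices to $X^*$; what remains is to package this into the clean statement above for an \emph{arbitrary} $(<\aleph_0)$-region $D$ with $\cl(D)\cap X=\emptyset$, not merely for the particular regions enumerated in the construction.

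The main step is therefore a covering argument. Let $D$ be any $(<\aleph_0)$-region of $G$ with $\cl(D)\cap X=\emptyset$. First I would argue that $V(D)\cap V(X)$ is finite: since $\cl(D)$ contains no end of $X$ and $D$ has finite neighbourhood, $V(D)$ meets $V(X)$ only in a bounded set — more precisely, $V(X)\cap V(D)$ is contained in $N_G(D)$ together with finitely many vertices, because any infinite subset of $V(X)$ inside $D$ would, by the star–comb lemma applied inside $D$, yield a comb in $D$ attached to $V(X)$ whose spine lies in an end in $\cl(D)\cap\cl(X)$, contradicting $\cl(D)\cap X=\emptyset$. Next, $D\setminus V(X)$ is a disjoint union of components of $G-V(X)$, each of which is a $(<\aleph_0)$-region of $G$ (its neighbourhood lies in $N_G(D)\cup (V(X)\cap V(D))$, which is finite), and each of whose closures still avoids $X$; since $D$ has finite neighbourhood, there are only finitely many such components (otherwise again a comb in $D$ attached to $N_G(D)$ would force an end of $D$, and accumulation of infinitely many components of $G-V(X)$ inside $D$ produces an end of $D$ in $\cl(X)$, contradiction). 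Each such component is one of the regions handled in the construction of \cite{koloschin2023end}, so by Claim~5.3 it contributes only finitely many vertices to $X^*$. Summing the finitely many finite contributions, together with the finitely many vertices of $V(X)\cap V(D)$, gives $|V(D)\cap X^*|<\aleph_0$, as desired.

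\textbf{Main obstacle.} The delicate point is the second finiteness claim: that a $(<\aleph_0)$-region $D$ with $\cl(D)\cap X=\emptyset$ meets only finitely many components of $G-V(X)$. This is really a compactness statement about $|G|$, and the honest proof goes through the star–comb lemma (or directly through the end-accumulation characterisation of boundary points recalled in the preliminaries: an end $\eps$ lies in $\partial(X)$ iff there is a comb attached to $X$ with spine in $\eps$). I would need to verify that an infinite family of components of $G-V(X)$ inside $D$, together with $D$'s finite neighbourhood, does force a comb attached to $V(X)$ with spine inside $D$, hence an end of $G$ lying in $\cl(D)\cap\cl(X)=\cl(D)\cap X$ — contradiction. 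Once this topological lemma is in place, the rest is bookkeeping over finitely many finite sets, so I do not anticipate further difficulty. If extracting the precise form of Claim~5.3 from \cite{koloschin2023end} turns out to be awkward, the fallback is to reprove the relevant finiteness directly from the safe-extension construction, which only ever adds, per region, a set bounded in terms of that region's (finite) neighbourhood.
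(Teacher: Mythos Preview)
The paper does not actually prove this theorem: it records it as a direct consequence of Theorem~5.1 and Claim~5.3 of \cite{koloschin2023end}, with no further argument. So there is no proof in the paper to compare against; the intended reading is that Claim~5.3, extracted from inside the proof of Theorem~5.1 there, already yields the `moreover' clause for the specific envelope that construction produces.

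Your covering argument, by contrast, has a real gap. Note first that $\cl(D)\cap X=\emptyset$ already forces $V(D)\cap V(X)=\emptyset$, since $V(D)\subseteq\cl(D)$; so your first step is trivial, and the comb argument you give for it is unnecessary --- and in any case that argument only places the resulting end in $\cl(X)$, not in $X$, a conflation of $X$ with $\cl(X)$ that recurs throughout your sketch. The decisive problem is the decomposition step. Since $V(D)\cap V(X)=\emptyset$ and $D$ is connected, $D$ lies inside a \emph{single} component $C$ of $G-V(X)$, possibly properly; it is not a union of such components, and the neighbourhood computation you give (``its neighbourhood lies in $N_G(D)\cup(V(X)\cap V(D))$'') describes components of $D-V(X)$, not of $G-V(X)$. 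Worse, there is no reason that $\cl(C)\cap X=\emptyset$: take $G$ a ray with unique end $\eps$, set $X=\{\eps\}$, and let $D$ be any finite subpath; then $V(X)=\emptyset$, so $C=G$ and $\eps\in\cl(C)\cap X$, while $\cl(D)\cap X=\emptyset$. Hence, if Claim~5.3 only controls the contribution of the construction on components of $G-V(X)$ whose closures avoid $X$ (which is how you describe it), you cannot invoke it on $C$, and your reduction collapses. What is actually needed is an argument internal to the envelope construction showing that only finitely many vertices of $X^*\setminus V(X)$ land in such a $D$; this is closer to your stated fallback, but it cannot be obtained by the black-box covering you propose.
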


We need the `moreover'-part of \cref{thm:Envelope_lemma:Detailed} to conclude the following variant of \cref{thm:Envelope_lemma:Detailed}, which ensures that we can choose $X^*$ to be a subset of $\interior(\sigma_C)$ (see the proof sketch in \cref{section:ProofSketch}).

\begin{lemma} \label{lem:Envelopes:Final}
    Let $G$ be a graph, $X \subseteq V(G) \cup \Omega(G)$, and let $\cD$ be a set of pairwise non-touching  $(<\aleph_0)$-regions of $G$ whose closures avoid $X$. Then there is an envelope $X^*$ of $X$ that avoids $\bigcup \cD$.
\end{lemma}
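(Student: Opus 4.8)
\textbf{Proof plan for \cref{lem:Envelopes:Final}.}
The plan is to deduce this from the `moreover'-part of \cref{thm:Envelope_lemma:Detailed} together with a modification argument that removes from the candidate envelope the (necessarily finite) part inside each region of $\cD$. First I would apply \cref{thm:Envelope_lemma:Detailed} to $X$ to obtain an envelope $X_0$ of $X$ with the additional property that $V(D)\cap X_0$ is finite for every $(<\aleph_0)$-region $D$ of $G$ whose closure avoids $X$; in particular $V(D)\cap X_0$ is finite for every $D\in\cD$. The idea is then to set $X^* := X_0 \setminus \bigcup_{D\in\cD} V(D)$ and to verify that this is still an envelope of $X$ that moreover avoids $\bigcup\cD$ by construction. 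The point of removing only vertices that lie in the $D\in\cD$ — each of which is a finite set — is that we are discarding only a `small' (finite-adhesion-controlled) amount from each region, so the topological boundary should be unchanged.

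The key steps, in order. (1) We still have $X\cap V(G)\subseteq X^*$: since the closures of the regions in $\cD$ avoid $X$, no vertex of $X$ lies in any $V(D)$, so removing $\bigcup_{D\in\cD}V(D)$ deletes no vertex of $X\cap V(G)$ from $X_0$. (2) $X^*$ has finite adhesion in $G$: a component $C'$ of $G-X^*$ either is already a subgraph of a component of $G-X_0$ (in which case $N(C')\subseteq N_{G-X_0}(\text{that component})\cup(\text{finitely many deleted vertices})$ is finite), or it has grown by absorbing some of the finitely-many-per-region deleted vertices together with pieces of the $D\in\cD$. Here I would use that the regions in $\cD$ are pairwise non-touching, so that a single component $C'$ of $G-X^*$ can meet at most \dots\ — more carefully, I would argue that $N_G(C')$ is contained in $N_{G-X_0}(C'')\cup \bigl(X_0\cap \bigcup\{V(D): D\in\cD,\ D\text{ touches }C''\}\bigr)$ for the component $C''$ of $G-X_0$ containing $C'\setminus\bigcup\cD$, and bound the latter using that each $D$ is a $(<\aleph_0)$-region (so $N_G(D)$ is finite) and that $C''$, being finitely separated from $X_0$, can touch only finitely many pairwise non-touching regions whose boundaries avoid $X$. (3) $\partial(X^*)=\cl(X)\cap\Omega(G)$: the inclusion $\supseteq$ follows because $X\cap V(G)\subseteq X^*$ and closure is monotone, so every end in $\partial(X_0)=\cl(X)\cap\Omega(G)$ still lies in $\partial(X^*)$ — here one uses that removing vertices cannot make it \emph{harder} for an end to be in the boundary, i.e.\ if there is no finite $(X\cap V(G))$--$\eps$ separator then a fortiori none for the larger set, combined with $\interior(X^*)=X^*$ consisting only of vertices. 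For $\subseteq$, suppose $\eps\in\partial(X^*)$; then there is a comb attached to $X^*$ with spine in $\eps$, and since $X^*\subseteq X_0$ this comb is attached to $X_0$ as well, whence $\eps\in\partial(X_0)=\cl(X)\cap\Omega(G)$. (4) $X^*\cap\bigcup\cD=\emptyset$ is immediate from the definition of $X^*$.

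The main obstacle I expect is Step (2), the finite-adhesion verification: deleting the (finite) traces $X_0\cap V(D)$ of the envelope inside the regions $D\in\cD$ can enlarge components of the complement by merging a component of $G-X_0$ with (parts of) several regions $D$, and one must make sure the resulting neighbourhood stays finite. The pairwise non-touching hypothesis on $\cD$ is exactly what is needed to control this: two regions in $\cD$ cannot be joined by an edge or share a vertex, so the only way a component of $G-X^*$ can see many of them is through a single component $C''$ of $G-X_0$; and such a $C''$ has finite neighbourhood in $X_0$, hence touches only finitely many of the pairwise-non-touching regions of $\cD$ (each touch must be witnessed through $N_{G}(C'')\cup V(C'')$, but $C''$ is disjoint from $\bigcup\cD$ by maximality, so each touch is witnessed by an edge from $N_{G}(C'')$ — a finite set — into some $D$, and distinct such $D$ are non-touching hence receive edges from distinct subsets; at worst finitely many). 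Once that is pinned down, adding these finitely many finite sets $N_G(D)$ and $X_0\cap V(D)$ to a finite $N_{G-X_0}(C'')$ keeps the neighbourhood finite, completing the argument.
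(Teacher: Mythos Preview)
Your proposed $X^* := X_0 \setminus \bigcup_{D \in \cD} V(D)$ need not be an envelope of $X$: the inclusion $\partial(X^*) \supseteq \cl(X) \cap \Omega(G)$ in your Step~(3) can fail outright, not merely lack justification. Your argument for that inclusion only yields $\partial(X \cap V(G)) \subseteq \partial(X^*)$ via monotonicity, but $\cl(X) \cap \Omega(G)$ can be strictly larger than $\partial(X \cap V(G))$ --- for instance whenever $X$ consists of ends only. Concretely, let $G$ be a ray $r_0 r_1 r_2 \cdots$ with unique end $\eps$, let $X = \{\eps\}$, let $X_0 = \{r_{2i} : i \geq 0\}$ (this \emph{is} an envelope of $X$ with the moreover property: $\partial(X_0) = \{\eps\}$, the components of $G - X_0$ are the singletons $\{r_{2i+1}\}$, and every $(<\aleph_0)$-region whose closure avoids $\eps$ is finite), and let $\cD = \{\{r_{2i}\} : i \geq 0\}$ (pairwise non-touching singleton regions with finite neighbourhoods and closures avoiding $X$). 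Then your $X^* = \emptyset$, so $\partial(X^*) = \emptyset \neq \{\eps\} = \cl(X) \cap \Omega(G)$.

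The paper repairs exactly this by setting $X^* := (X_0 \setminus \bigcup \cD') \cup \bigcup_{D \in \cD'} N_G(D)$, where $\cD'$ are those $D \in \cD$ meeting $X_0$. The added neighbourhoods are precisely what preserve the end-boundary: a comb attached to $X_0$ with spine in some $\eps \in \partial(X_0)$ whose teeth fall into $\bigcup \cD'$ must --- by non-touching and finiteness of each $V(D) \cap X_0$ --- meet infinitely many $N_G(D)$, and these now lie in $X^*$. In the ray example the paper's $X^*$ becomes $\{r_{2i+1} : i \geq 0\}$, which is again an envelope of $\{\eps\}$ avoiding $\bigcup\cD$.

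Your Step~(2) argument has an independent gap: you assert that a component $C''$ of $G - X_0$ is ``disjoint from $\bigcup\cD$ by maximality'', but regions $D \in \cD$ may well have vertices outside $X_0$ and hence inside $C''$, so your bound on the number of $D$'s touching $C''$ via the finite set $N_G(C'')$ does not go through as stated. The paper's addition of the $N_G(D)$'s sidesteps this too: it forces every $D \in \cD'$ to be its own component of $G - X^*$, so that every other component is contained in a single component of $G - X_0$, and the finite-adhesion argument becomes a count of how many $D$'s can send an edge into one such component.
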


\begin{proof}
    By applying \cref{thm:Envelope_lemma:Detailed} to $X$ we obtain an envelope $X^{**}$ of $X$. Let $\cD' \subseteq \cD$ be the set of all $D \in \cD$ such that $V(D) \cap X^{**} \neq \emptyset$, and let
    \[
    X^* := \Big(X^{**}\setminus \Big(\bigcup \cD'\Big)\Big) \cup \bigcup_{D \in \cD'} N_G(D).
    \]
    Since the regions in $\cD$ are pairwise non-touching, it clearly follows that $X^*$ avoids $\bigcup \cD$. We claim that $X^*$ is as desired. 
    \smallskip

    We first check that $\partial_G(X^*) = \partial_G(X^{**})$, which then yields $\partial_G(X^*) = \cl(X) \cap \Omega(G)$ since~$X^{**}$ is an envelope of~$X$. For this, let $\eps \in \partial_G(X^{**})$, and let~$C$ be a comb in~$G$ with spine in~$\eps$ and teeth in~$X^{**}$. If $C$ meets $X^*$ infinitely often, then $\eps \in \partial_G(X^*)$ as desired. Otherwise, $C$ contains infinitely many vertices of $\bigcup \cD'$. By the `moreover'-part of \cref{thm:Envelope_lemma:Detailed}, for every component $D \in \cD'$, the set $V(D) \cap X^{**}$ is finite. Hence, $C$ meets the neighbourhoods~$N_G(D)$ of $D \in \cD'$ infinitely often, and thus $C$ meets $X^*$ infinitely often. Hence, $\eps \in \partial_G(X^*)$. 
    \smallskip

    Conversely, let $\eps \in \partial_G(X^*)$, and let~$C$ be a comb in $G$ with spine in $\eps$ and teeth in $X^*$. If~$C$ meets~$X^{**}$ infinitely often, then $\eps \in \partial_G(X^{**})$ as desired. Otherwise, as $N_G(D)$ is finite for every $D \in \cD'$, the comb $C$ contains distinct vertices $v_1, v_2, \dots$ such that $v_i \in N_G(D_i)$ for distinct $D_i \in \cD'$. Since every $D_i$ lies in $\cD'$, the set $V(D_i) \cap X^{**}$ is non-empty, so we may pick some $u_i \in V(D_i) \cap X^{**}$.
    As every~$D_i$ is connected and $C$ meets $N_G(D_i)$, we can easily extend $C$ to a comb with teeth~$u_i$ and with the same spine as~$C$. As $u_i \in X^{**}$, it follows that $\eps \in \partial_G(X^{**})$.
    \smallskip

    It remains to check that $X^*$ has finite adhesion. For this, let $C$ be a component of $G-X^*$. If $C \cap D \neq \emptyset$ for some component $D \in \cD'$, then $C = D$ since $N_G(D) \subseteq X^*$ and $X^* \cap V(D) = \emptyset$, and hence $N_G(C) = N_G(D)$ is finite by the assumption on $\cD$. Thus, we may assume that $C$ avoids $\bigcup \cD'$, which implies that $C$ is included in a component $C'$ of $G-X^{**}$. Then 
    \[
    N_G(C) \subseteq (N_G(C') \cup V(C')) \cap X^* \subseteq N_G(C') \cup \Big(V(C') \cap \bigcup \{N_G(D) : D \in \cD'\}\Big).
    \]
    Since $N_G(C')$ is finite as $X^{**}$ has finite adhesion, and because every $N_G(D)$ is finite by the assumption on $\cD$, it suffices to show that there are at most finitely many $D \in \cD'$ with $V(C') \cap N_G(D) \neq \emptyset$. Let such $D$ be given, and let $v \in V(C') \cap N_G(D)$. 
    Since $D$ is connected and $v \in N_G(D)$, and because $D \in \cD'$ and hence $V(D) \cap X^{**} \neq \emptyset$, there is a $v$--$(V(D) \cap X^{**})$ path~$P$ in $G[V(D) \cup N_G(D)]$. Then~$P$ avoids~$X^{**}$ except in its endvertex $u$ in $X^{**}$, and hence $P$ is internally contained in $C'$. It follows that $u \in N_G(C')$. Hence, $N_G(C')$ contains at least one vertex of every component $D \in \cD'$ with $V(C') \cap N_G(D) \neq \emptyset$.
    As $N_G(C')$ is finite and the regions $D \in \cD'$ are disjoint, it follows that $C'$ intersects the neighbourhoods of at most finitely many $D \in \cD'$. This concludes the proof.
\end{proof}

\section{Separator algorithm}\label{section:Algorithm}

As described in the proof sketch in \cref{section:ProofSketch}, we will employ a (variant of an) algorithm developed by Albrechtsen, Jacobs, Knappe and Pitz~\cite{LinkedTDInfGraphs}. In this section, we introduce this algorithm. 
\smallskip

The following algorithm is taken from \cite{LinkedTDInfGraphs}*{Algorithm~7.2}, except that our algorithm already terminates after Case A of \cite{LinkedTDInfGraphs}*{Algorithm~7.2}.
\begin{algo} \label{algo:RegionAlgo}
    \textbf{Input:} a connected graph $H$; a finite set~$X$ of $k \in \N$ vertices of $H$; a set $\cD$ of pairwise non-touching end-linked $\lk$-regions that are disjoint from~$X$.

    \textbf{Output:} a transfinite sequence $\left(C_i \colon i < \kappa\right)$ indexed by some ordinal $\kappa< |H|^+$ (the successor cardinal of $|H|$), of distinct end-linked $\lk$-regions that are disjoint from $X$,  pairwise nested, and also nested with all $D \in \cD$. 
    
   \textbf{Recursion:} 
Iterate the following step for as long as possible:

\begin{quote}
As long as there is a $\lk$-region $C_i$ of $H$ that is
\begin{itemize}
    \item disjoint from $X$,
    \item not strictly contained in any $D \in \cD$, 
    \item $\eps$-linked for some end $\eps$ which lives in no~$C_j$ for $j<i$, and 
    \item nested with the $C_j$ for $j < i$ and with all $D \in \cD$, 
\end{itemize} 
then choose a \defn{nicest} such region $C_i$.
        Here, nicest\footnote{Note that there might be several nicest regions.} means that
        \begin{enumerate}[labelindent=1.5em, label=(N\arabic*), leftmargin=!]
            \item \label{nicest:ell_iminimum} $C_i$ is such an $\ell_i$-region where $\ell_i \in \N$ is minimum among such regions, and
            \item \label{nicest:inclusion-wisemaximal} $C_i$ is an inclusion-wise maximal such region subject to \cref{nicest:ell_iminimum}.
        \end{enumerate}
        Otherwise, if there is no such region, then terminate the recursion.
  \end{quote}
\end{algo}

We remark that if $R_1 \subseteq R_2 \subseteq \dots$ are $k$-regions of a graph $G$ that are all candidates for $C_i$ at some step $i$, then by \cite{LinkedTDInfGraphs}*{Lemma~7.3} their union $\bigcup_{i \in \N} R_i$ is also a candidate for $C_i$, and hence there always exists a region satisfying \ref{nicest:inclusion-wisemaximal}.
\smallskip

Roughly speaking, \cref{algo:RegionAlgo} chooses for every end~$\eps$ of~$H$ to which $X$ is not linked (and that lives in no $D \in \cD$) a region~$C$ of $H$ that is $\eps$-linked and whose neighbourhood $N(C)$ is minimal among all $\eps$-linked regions, which in particular implies that $N(C)$ is linked to~$X$ (see \cref{observation:algoLinkedEnds} below). 
\cref{algo:RegionAlgo} picks these regions one after another; first choosing such a region for every end of~$H$ that can be separated from~$X$ by a single vertex of~$H$, then choosing a region for every end of~$H$ which can be separated from $X$ by two vertices, and so on. 

Since \cref{algo:RegionAlgo} only selects nicest regions, a region~$C_i$ is never contained in a region~$C_j$ chosen at an earlier step $j < i$, and some~$C_i$ can only be a superset of some $C_j$ if $\ell_i > \ell_j$. In particular, any chain of the form $C_{i_1} \subsetneq C_{i_2} \subseteq C_{i_3} \subsetneq \ldots$ is finite (as $\ell_i < k$ for all $i < \kappa$). Therefore, every region~$C_i$ is contained in an $\subseteq$-maximal region of $(C_i : i < \kappa)$, and every component of $G-\bigcup_{i<\kappa}C_i$ is some~$C_i$.

For the proof of \cref{main:LinkedTD:Psi}, we need the following observation about \cref{algo:RegionAlgo}, which we already mentioned above:

\begin{observation} \label{observation:algoLinkedEnds}
    Given the setting of \cref{algo:RegionAlgo} and any end $\eps$ of $H$ it holds that
    \begin{enumerate}

    \item \label{property:everyEndInSomeC_i} if $\eps$ is not living in a region in $\cD$ and $X$ is not linked to $\eps$, then some $C_i$ is $\eps$-linked, and 

    \item \label{property:linkedToX} the neighbourhood $N(C_i)$ of every region $C_i$ is linked to $X$.
    \end{enumerate}
\end{observation}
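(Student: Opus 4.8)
\textbf{Proof plan for Observation~\ref{observation:algoLinkedEnds}.}

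The plan is to extract both statements from the internal logic of \cref{algo:RegionAlgo} together with a standard ``nicest region'' argument, mirroring the reasoning already used by Albrechtsen, Jacobs, Knappe and Pitz in \cite{LinkedTDInfGraphs}. For \ref{property:everyEndInSomeC_i}, suppose $\eps$ is an end of $H$ that lives in no region of $\cD$ and to which $X$ is not linked. First I would observe that there is \emph{some} end-linked $\lk$-region witnessing this: since $X$ is not linked to $\eps$, by Menger's theorem (in the infinite, end-version, e.g.\ via the fact that the maximum number of disjoint $X$--$\eps$ paths and rays equals the minimum order of an $X$--$\eps$ separator) there is a finite $X$--$\eps$ separator $S$ with $|S| < k$; then $C := C_H(S,\eps)$ is a region disjoint from $X$ with $N(C) \subseteq S$, so $|N(C)| < k$, and one can shrink $S$ to $N(C)$ to make $C$ actually $\eps$-linked (taking $S$ of minimum size forces $N(C) = S$ and $N(C)$ linked to $\eps$ by the choice of $S$ as a minimal separator). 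Moreover $C$ is not strictly contained in any $D \in \cD$, because $\eps$ lives in $C$ but in no $D \in \cD$, and $C$ can be chosen nested with all $D \in \cD$ and with all previously chosen $C_j$ (this is exactly the kind of nestedness bookkeeping handled in \cite{LinkedTDInfGraphs}; one may push $S$ to the boundary of a suitable union). Now run the recursion: at the step $i$ where $\eps$ is first ``uncovered'' (i.e.\ $\eps$ lives in no $C_j$ for $j < i$), the region $C$ above is a legal candidate, so the recursion does not terminate at step $i$ and picks a nicest such $C_i$. Any nicest region chosen at that step is $\eps'$-linked for some end $\eps'$ living in no earlier $C_j$; the key point is that the chosen $C_i$ must in fact be $\eps$-linked --- this needs care, see below. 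Once $C_i$ is $\eps$-linked we are done with \ref{property:everyEndInSomeC_i}.

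For \ref{property:linkedToX}, fix a region $C_i$ chosen at step $i$, say $C_i$ is $\eps$-linked, so $|N(C_i)| = \ell_i$ and $N(C_i)$ is linked to $\eps$ (there are $\ell_i$ disjoint $N(C_i)$--$\eps$ paths and rays). Suppose for contradiction $N(C_i)$ is \emph{not} linked to $X$. Then by Menger's theorem there is an $X$--$N(C_i)$ separator $S$ of size $< \ell_i$; since $\eps$ lives in $C_i$ and $C_i$ is ``behind'' $N(C_i)$ from $X$'s perspective, $S$ is also an $X$--$\eps$ separator, so $C' := C_H(S,\eps)$ is an $\eps$-linked region (again after shrinking $S$ to $N(C')$) with $|N(C')| \le |S| < \ell_i$, disjoint from $X$, not strictly inside any $D \in \cD$ (same reason as before), and nested with the $C_j$, $j<i$, and all $D \in \cD$ (after the usual nestedness adjustment). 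But $\eps$ lives in no $C_j$ for $j < i$, since it lives in $C_i$ and the $C_j$ were chosen not to cover it --- wait, more precisely $\eps$ lives in no $C_j$, $j<i$, because $C_i$ was chosen to be $\eps$-linked and the recursion only picks $\eps'$-linked regions for ends not living in an earlier region; so $C'$ is a legal candidate at step $i$ with $|N(C')| < \ell_i$, contradicting \ref{nicest:ell_iminimum} (the minimality of $\ell_i$). Hence $N(C_i)$ is linked to $X$.

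\textbf{Main obstacle.} The subtle point in \ref{property:everyEndInSomeC_i} is the claim that the nicest region $C_i$ selected at step $i$ (the first step where $\eps$ is uncovered) is $\eps$-linked, rather than merely $\eps'$-linked for some \emph{other} uncovered end $\eps'$. I expect this to be handled as follows: the recursion allows picking a nicest region for any eligible end, and one must argue that $\eps$ itself gets covered eventually. Either the recursion at step $i$ happens to pick an $\eps$-linked region (in which case we are immediately done), or it picks some $C_i$ that is not $\eps$-linked; but then $\eps$ does not live in $C_i$ unless $C_i$ happens to contain a ray of $\eps$ --- and if $\eps$ does come to live in some $C_j$ at a later stage, then since the algorithm only selects $\eps'$-linked regions and a region $C_j$ with $\eps$ living in it and $C_j$ being $\eps'$-linked for $\eps' \ne \eps$ would force $\eps$ and $\eps'$ to be separated inside $C_j$ by $<\ell_j$ vertices (impossible if $\eps$ genuinely lives in $C_j$ and $C_j$ is $\eps$-separated from nothing smaller) --- so in fact whenever $\eps$ lives in a chosen region $C_j$, that region is $\eps$-linked. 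Combined with the observation that $\eps$ must eventually live in some chosen region (otherwise $C := C_H(S,\eps)$ as above remains a legal candidate forever, but the recursion has length $<|H|^+$ and the regions are pairwise nested with bounded $\subseteq$-chains, so it terminates, and at termination an uncovered $\eps$ with a legal candidate contradicts the termination condition), this yields \ref{property:everyEndInSomeC_i}. The cleanest way to present this is probably to prove directly: if $\eps$ lives in some $C_i$, then $C_i$ is $\eps$-linked; and then observe $\eps$ lives in some $C_i$ by the termination condition.
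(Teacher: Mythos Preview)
Your approach for \ref{property:linkedToX} is essentially the paper's: assume $N(C_i)$ is not linked to $X$, produce via Menger a smaller $X$--$N(C_i)$ separator $S$, and use the uncrossing lemma (\cref{lem:theminiuncrossinglemma}) to turn $C_H(S,\eps)$ into an $\eps$-linked $(<\ell_i)$-region nested with $\cD$ and all $C_j$ of smaller order, contradicting \ref{nicest:ell_iminimum}. One point you gloss over: your ``same reason as before'' for why $C'$ is not strictly contained in any $D \in \cD$ does not apply, since in part~(ii) there is no hypothesis that $\eps$ avoids $\cD$. The paper handles this by observing that any $D \in \cD$ in which $\eps$ lives must be contained in $C_i$ (since $C_i$ is nested with $D$ but not strictly inside it), hence in $C_H(S,\eps)$; this is precisely the hypothesis of \cref{lem:theminiuncrossinglemma}.

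For \ref{property:everyEndInSomeC_i}, however, your resolution of the ``main obstacle'' has a genuine gap. You propose to show that whenever $\eps$ lives in some $C_j$, that $C_j$ is automatically $\eps$-linked, arguing that ``$C_j$ being $\eps'$-linked for $\eps' \ne \eps$ would force $\eps$ and $\eps'$ to be separated inside $C_j$ by $<\ell_j$ vertices (impossible \ldots)''. This reasoning is incorrect: a region can perfectly well be $\eps'$-linked while containing another end $\eps$ to which its neighbourhood is \emph{not} linked --- just place a small cut between $N(C_j)$ and $\eps$ deep inside $C_j$ --- and two distinct ends are always separated by some finite set, so the parenthetical ``impossible'' has no content.

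The paper's argument avoids this trap by a different route. It assumes for contradiction that \emph{no} $C_i$ is $\eps$-linked, lets $i$ be minimal with $\eps$ living in $C_i$, and sets $m := \ell_i$ (or $m := |X|$ if no such $i$ exists). Since $C_i$ is not $\eps$-linked, there is an $N(C_i)$--$\eps$ separator of size $< m$, so a $\size$-minimal $X$--$\eps$ separator $S$ satisfies $|S| < m$. Crucially, $\eps$ lives in no region of $\cD \cup \cC^{<m}_{\max}$, so \cref{lem:theminiuncrossinglemma} applied to $C_H(S,\eps)$ yields an $\eps$-linked $(<m)$-region $C'$ nested with all of $\cD$ and all $C_j$ of order $< m$. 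This $C'$ is then a legal candidate \emph{at the step where the algorithm first chose an $m$-region}, contradicting \ref{nicest:ell_iminimum} there. The key point you are missing is that the contradiction is located not at step $i$ but at this earlier step, and that the uncrossing lemma only buys you nestedness with regions of order $< m$, which is exactly what is needed at that earlier step but would not suffice at step $i$ itself.
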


For the proof of \cref{observation:algoLinkedEnds} we need two lemmas from \cite{LinkedTDInfGraphs}*{Section~7}.
The first lemma is an extension of \cite{LinkedTDInfGraphs}*{Lemma~7.4}, and yields an $\eps$-linked $\lk$-region for every end $\eps$ of $H$ to which $X$ is not linked. Note that this region might not yet be a candidate for a region $C_i$ in \cref{algo:RegionAlgo} as it might not be nested with regions picked previously by \cref{algo:RegionAlgo}.

\begin{lemma} \label{lem:existenceofregion}
    Let $H$ be a graph, $X \subseteq V(H)$ a finite set of vertices and $\eps$ an end of $H$.
    Then for every $\size$-minimal $X$--$\eps$ separator $S$ the region $C(S, \eps)$ is $\eps$-linked and disjoint from $X$.
\end{lemma}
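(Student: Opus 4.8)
\textbf{Proof plan for Lemma \ref{lem:existenceofregion}.}

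The goal is to show that for a $|\!\cdot\!|$-minimal $X$--$\eps$ separator $S$, the region $C := C(S,\eps)$ is disjoint from $X$ (immediate from the definition of an $X$--$\eps$ separator) and is $\eps$-linked, i.e.\ $\eps$ lives in $C$ (again immediate) and $N_H(C)$ is linked to $\eps$. The only real content is the last assertion. First I would observe that $N_H(C) \subseteq S$: every neighbour of $C$ outside $C$ must lie in $S$ since $C$ is a component of $H-S$. In fact I expect $N_H(C) = S$ to hold, or at least that this is the crucial reduction — if some vertex $s \in S$ had no neighbour in $C$, then $S \setminus \{s\}$ would still be an $X$--$\eps$ separator (removing $s$ does not reconnect $C$ to the rest, and $C(S\setminus\{s\},\eps) \subseteq C \cup \{s\}$ is still disjoint from $X$ provided $s \notin X$, which holds since $S$ is disjoint from $X$ by the assumption that it is an $X$--$\eps$ separator and $C(S,\eps)$ misses $X$... actually one must check $s \notin X$ separately, but $X$--$\eps$ separators are just finite vertex sets, so this needs the minimality of $S$ to be exploited more carefully). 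So the first key step is: \emph{by minimality of $S$, we have $N_H(C) = S$}, with the subtlety that one should argue $S \cap X = \emptyset$ first, or work with $S' := S \setminus X$ and note $C(S',\eps)$ is still disjoint from $X$.

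The second, and main, step is to produce $|S|$ disjoint $S$--$\eps$ paths-and-rays, each starting at a distinct vertex of $S$. Here the natural tool is a Menger-type argument inside $H$ between $S$ and $\eps$: since $S$ is a $|\!\cdot\!|$-\emph{minimal} $X$--$\eps$ separator, no set of fewer than $|S|$ vertices separates $X$ from $\eps$, and I would leverage this to show no set of fewer than $|S|$ vertices separates $S$ from $\eps$ either. Concretely, suppose $T$ with $|T| < |S|$ separated $S$ from $\eps$ in $H$ — meaning $C(T,\eps)$ meets no vertex of $S$. Then I claim $T$ (or a suitable subset of it) would already separate $X$ from $\eps$: any comb or ray from $\eps$ reaching towards $X$ must pass through $S$ (as $S$ is an $X$--$\eps$ separator and $X \subseteq H - C$), hence through the component $C(T,\eps)$'s complement, forcing it to meet $T$. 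This contradicts minimality of $S$. Having established that $S$ cannot be separated from $\eps$ by fewer than $|S|$ vertices, the infinite version of Menger's theorem for ends (the standard fact that $X$ is linked to $\eps$ iff no smaller set separates them — this is essentially the definition of "linked to $\eps$" combined with a compactness/Menger argument, cf.\ the treatment of end-linked regions and \cite{LinkedTDInfGraphs}) gives the desired family $\{P_s : s \in S\}$ of disjoint $S$--$\eps$ paths and rays with $s \in P_s$. Since $N_H(C) = S$, this family witnesses that $N_H(C)$ is linked to $\eps$, so $C$ is $\eps$-linked.

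The step I expect to be the main obstacle is the Menger-type deduction "$S$ not separable from $\eps$ by $<|S|$ vertices" together with the clean invocation of the end-Menger theorem to get the linking family — in particular making sure the paths genuinely emanate one from each vertex of $S$ (rather than from a subset), which is where the equality $N_H(C) = S$ and the minimality of $S$ both get used. The routine parts (disjointness from $X$, $\eps$ living in $C$) are trivial; the bookkeeping around whether $S$ might intersect $X$ or $C$, and the precise form of the infinite Menger statement being cited, is where care is needed. I would state the end-Menger fact being used explicitly (or cite \cite{LinkedTDInfGraphs}*{Lemma~7.4} which this lemma is said to extend) and then the argument is short.
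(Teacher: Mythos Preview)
Your plan correctly isolates the real content: disjointness from $X$ and $\eps$ living in $C(S,\eps)$ are immediate, and the argument that $N_H(C)=S$ by minimality is fine (the worry about $S\cap X$ is unnecessary---if some $s\in S$ had no neighbour in $C$, then $C$ would still be a component of $H-(S\setminus\{s\})$, so $S\setminus\{s\}$ would be a smaller $X$--$\eps$ separator regardless of whether $s\in X$). Your reduction showing that no set of size ${<}|S|$ separates $S$ from $\eps$ is also correct.

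The gap is the final step. You propose to invoke an ``end-Menger theorem'' to pass from ``no smaller $S$--$\eps$ separator exists'' to ``$S$ is linked to $\eps$'', citing \cite{LinkedTDInfGraphs}*{Lemma~7.4} if needed. But Lemma~7.4 there only treats ends of \emph{countable} combined degree; the present lemma is explicitly billed as an \emph{extension} of Lemma~7.4, and its new content is precisely the uncountable case. There is no off-the-shelf end-Menger statement of the required generality to cite, so your plan leaves that case open.

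The paper closes the gap by a case split. For countable combined degree it simply cites Lemma~7.4. For uncountable combined degree it uses \cref{lem:uncountableEndInfDom} to conclude that $\Dom(\eps)$ is infinite; then every finite $X$--$\eps$ separator is an $X$--$\Dom(\eps)$ separator and conversely, so $S$ is also a $\size$-minimal $X$--$\Dom(\eps)$ separator. Ordinary finite Menger now gives $|S|$ disjoint $X$--$\Dom(\eps)$ paths, whose tails from $S$ onward are $|S|$ disjoint $S$--$\Dom(\eps)$ paths, i.e.\ $S$--$\eps$ paths. The idea you are missing is that in the uncountable-degree case the linking family can be taken to consist entirely of \emph{paths to dominating vertices}, so no end-specific Menger theorem is needed and finite Menger suffices.
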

\begin{proof}
    If $\eps$ has countable combined degree, this follows from \cite{LinkedTDInfGraphs}*{Lemma~7.4}.
    If $\eps$ has uncountable combined degree, then $\eps$ is infinitely dominated by \cref{lem:uncountableEndInfDom}. Given any finite set $A \subseteq V(H)$ of vertices, the graph $C(A,\eps)$ contains infinitely many vertices dominating $\eps$ and so any finite $X$--$\Dom(\eps)$ separator is a finite $X$--$\eps$ separator and vice versa. Let $S$ be a $\size$-minimal $X$--$\eps$ separator. By Menger's Theorem, $S$ is linked to $\Dom(\eps)$ and hence linked to $\eps$. So, $C(S, \eps)$ is an $\eps$-linked region of $H$ and disjoint to $X$.
\end{proof}

The following lemma is the key in transforming the end-linked regions provided by \cref{lem:existenceofregion} into end-linked regions that are nested with all previously picked region. 

\begin{lemma}\cite{LinkedTDInfGraphs}*{Lemma~7.6}\label{lem:theminiuncrossinglemma}
    Let $H$ be a graph and let $\cE$ be a set of pairwise non-touching end-linked $\lA$-regions.
    Suppose further that $C$ is a $k$-region with $k \in \N$ that is $\eps$-linked for some end $\eps$ that does not live in any $D \in \cE$ which is not contained in~$C$. 
    
    Then there exists an $\eps$-linked $\lek$-region $C'$ that is nested with all $D \in \cE$.
\end{lemma}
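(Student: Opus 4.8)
\textbf{Plan of proof for Lemma~\ref{lem:theminiuncrossinglemma}.}
The overall strategy is an uncrossing argument: we start with the given $\eps$-linked $k$-region $C$ and repeatedly ``uncross'' it against the regions $D \in \cE$ that it touches but is not nested with, each time passing to a region whose neighbourhood is no larger and which is $\eps$-linked. The key technical input is that if $C$ is $\eps$-linked and $D$ is an end-linked region (say $\delta$-linked for an end $\delta \neq \eps$, since $\eps$ does not live in $D$), then one of the ``corner regions'' obtained by intersecting $C$ with a side of the separation $(V(D)\cup N(D),\; V(G)\setminus D)$ is again $\eps$-linked with neighbourhood of order $\le k$. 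This is the standard submodularity-of-connectivity phenomenon: writing $s = |N(C)|$, $t = |N(D)|$, and letting the two corners have boundary orders $p$ and $q$, submodularity of the neighbourhood-size function gives $p + q \le s + t$; since $C$ and $D$ are end-linked, a Menger/fan argument forces the ``wrong'' corner (the one on the $D$-side, where $\delta$ lives) to have order $\ge t$, so the $\eps$-side corner has order $p \le s = k$, and linkedness of $N(C)$ to $\eps$ transfers to linkedness of the new, smaller separator to $\eps$ via rerouting the linking paths/rays.

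First I would set up the corner regions precisely. Given $D \in \cE$ not nested with $C$ (so they touch but neither contains the other), consider the separation $(V(G)\setminus (V(D)\setminus N(D)),\; V(D)\cup N(D))$ of order $t$ induced by $D$, together with the separation $(\cl(C)\text{-complement side},\; V(C)\cup N(C))$ of order $s=k$ induced by $C$. Their corner $C' := C \setminus (V(D)\cup N(D))$, more precisely the component of $G[V(C)\setminus(V(D)\cup N(D))]$ in which $\eps$ lives, is a candidate replacement region; one checks $\eps$ indeed lives there because $\eps$ does not live in $D$, so $C(S,\eps)$ avoids $V(D)\setminus N(D)$ for a suitable finite $S$, and $C'$ is the component of $G - (N(C)\cup N(D))$ containing a tail of some ray of $\eps$. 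Then I would bound $|N(C')|$: it is contained in $N(C) \cup (N(D)\cap V(C))$ minus the part of $N(C)$ swallowed into $D$, and the submodular inequality $|N(C')| + |N(D')| \le |N(C)| + |N(D)|$ (where $D'$ is the opposite corner, on the $D$-side) combined with $|N(D')| \ge |N(D)|$ — which holds because $\delta$ lives in $D'$ and $D$ is $\delta$-linked, so $N(D)$ being $\delta$-linked forces any $N(C)\cup N(D)$-separator of $\delta$ to have size $\ge |N(D)|$ — yields $|N(C')| \le |N(C)| \le k$.

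The main obstacle, and where the most care is needed, is twofold. First, verifying that $C'$ is again \emph{$\eps$-linked} and not merely an $\le k$-region in which $\eps$ lives: one must take the family of disjoint $N(C)$--$\eps$ paths/rays witnessing that $C$ is $\eps$-linked and reroute/truncate them so that they start on $N(C')$, using that $C'$ lies ``between'' $N(C')$ and $\eps$ and that $|N(C')| \le |N(C)|$ forces no loss of connectivity (a Menger-type argument inside $C$). Second, ensuring \emph{termination and global nestedness}: after uncrossing with one $D$ we obtain $C'$ nested with that $D$, but we must argue we can iterate over all $D \in \cE$ that $C'$ still crosses without reintroducing crossings with $D$'s already handled — here one uses that the regions in $\cE$ are pairwise non-touching, so ``pushing $C$ off of $D$'' cannot create a new intersection pattern with a different $D'' \in \cE$ that was previously nested with the current region, and that each uncrossing strictly decreases, e.g., $|V(C')\cap \bigcup\{V(D): D\in\cE,\ D \text{ not nested}\}|$ in an appropriate well-founded sense (or one handles it by a direct transfinite/Zorn argument on the poset of $\eps$-linked $\le k$-regions ordered by inclusion, taking unions at limits via \cite{LinkedTDInfGraphs}*{Lemma~7.3}-type compactness and picking an $\subseteq$-maximal one, which must then be nested with all of $\cE$). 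I would follow whichever of these bookkeeping routes the cited \cite{LinkedTDInfGraphs}*{Lemma~7.6} uses; the substantive mathematical content is entirely in the single-step uncrossing with its submodularity-plus-Menger core.
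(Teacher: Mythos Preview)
The paper does not actually prove this lemma: its ``proof'' is a two-sentence reduction, observing that \cite{LinkedTDInfGraphs}*{Lemma~7.6} is stated for the weaker hypothesis of \emph{well-linked} regions in $\cE$, and that by \cite{LinkedTDInfGraphs}*{Lemma~7.5} every end-linked region is well-linked, so the present statement is an immediate special case. No uncrossing, submodularity, or Menger argument is carried out here; all of that is deferred to the cited source.

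Your proposal, by contrast, is an outline of a direct proof, and the shape you describe (corner regions, submodularity of $|N(\cdot)|$, using $\delta$-linkedness of $D$ to force the ``wrong'' corner to have order $\ge |N(D)|$, then iterating or taking a maximal element) is indeed the standard mechanism behind such uncrossing lemmas and is presumably close to what \cite{LinkedTDInfGraphs} does. As a sketch it is reasonable, though the two points you flag as obstacles --- transferring $\eps$-linkedness to the corner and controlling the iteration over possibly infinitely many $D \in \cE$ --- are exactly where the real work lies and would need to be made precise; in particular, the termination argument you gesture at is not obviously well-founded when $\cE$ is infinite, and the original proof likely handles this via the ``well-linked'' abstraction rather than a naive one-at-a-time uncrossing. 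For the purposes of \emph{this} paper, however, none of that is needed: the intended proof is simply the citation.
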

\begin{proof}
    In \cite{LinkedTDInfGraphs}*{Lemma~7.6}, the authors require the weaker condition of $\cE$ being a set of pairwise non-touching well-linked $\lA$-regions. We will not define well-linked, but note that by \cite{LinkedTDInfGraphs}*{Lemma~7.5} any end-linked region is well-linked. 
\end{proof}

Now we are in a position to prove \cref{observation:algoLinkedEnds}.

\begin{proof}[Proof of \cref{observation:algoLinkedEnds}]
    Let $(C_i \colon i<\kappa)$ be the regions picked by \cref{algo:RegionAlgo}. 
    Given $n \in \mathbb{N}$ with $n<|X|$, let $\cC^{<n}_{\max}$ be the set of all $\subseteq$-maximal regions among all $(<n)$-regions $C_i$. 

    \cref{property:everyEndInSomeC_i}:
    Suppose for a contradiction that $\eps$ does not live in any region in $\cD$, and that no $C_i$ is $\eps$-linked.
    Let $S$ be a $\size$-minimal $X$--$\eps$-separator. By \cref{lem:existenceofregion} the region $C(S,\eps)$ is $\eps$-linked and disjoint from $X$. As $X$ is not linked to $\eps$ it follows that $|S|<|X|$.
    If $\eps$ lives in a region $C_j$ for $j < \kappa$, let $i < \kappa$ be minimal with $\eps$ living in $C_i$. As $C_i$ is not $\eps$-linked by assumption, there is a $N(C_i)$--$\eps$ separator of size $<\ell_i$. Moreover, any such separator also separates $X$ from $\eps$ and as such $|S| < \ell_i$. Let $m=\ell_i$ or $m=|X|$ if $\eps$ lives in no region $C_j$ for $j < \kappa$. 
    In particular, $\eps$ does not live in any region in $\cD \cup \cC^{<m}_{\max}$. 
    Applying \cref{lem:theminiuncrossinglemma} on $H$, $\cE$ the set of all $\subseteq$-maximal regions in $\cD \cup \cC^{<m}_{\max}$ and $C := C(S,\eps)$ returns an $\eps$-linked $(<m)$-region $C'$ nested with all regions in $\cD$ and $C_i$ with $\ell_i < m$. Hence, $C'$ would have been a better choice at the step where \cref{algo:RegionAlgo} chose the first $m$-region, a contradiction.

    \cref{property:linkedToX}:
    Without loss of generality, we may assume that $C_i$ is an $\eps$-linked region. Then $\eps$ lives in no region in the set $\cC_{\max}^{<\ell_i}$. If $N(C_i)$ is not linked to $X$ then by Menger's Theorem there is an $X$--$N(C_i)$ separator $S$ of size $|S| < \ell_i$. By minimality, $S$ is disjoint to $C_i$ and hence $C(S,\eps)$ includes $C_i$ as a subgraph. Moreover, every region in $\cD$ in which $\eps$ lives is included in $C_i$ and hence in $C(S, \eps)$. Applying \cref{lem:theminiuncrossinglemma} on $H$, $\cE$ the set of all $\subseteq$-maximal regions in  $\cD \cup \cC^{<\ell_i}_{\max}$ and $C := C_H(S,\eps)$ returns an $\eps$-linked $(<\ell_i)$-region nested with all $D \in \cD$ and $C_j$ with $\ell_j < \ell_i$, contradicting the choice of $C_i$.
\end{proof}

\section{Proof of the main theorem} \label{section:prescribedLinked}

In this section we prove \cref{main:LinkedTD:Psi}. 
Since \ref{itm:main:Psi:i} implies \ref{itm:main:Psi:ii} is trivial and \ref{itm:main:Psi:ii} implies \ref{itemGdelta} is easy (cf.\ \cref{sec:Intro}), it suffices to show that \ref{itemGdelta} implies \ref{itm:main:Psi:i}, that is

\begin{theorem} \label{thm:LinkedTD:Psi:Copy}
    Let $\Psi \subseteq \Omega(G)$ be a $G_{\delta}$-set of ends in some graph $|G|$. Then $G$ admits a linked, tight and componental rooted \td~of finite adhesion displaying $\Psi$.
\end{theorem}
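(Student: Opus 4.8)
The plan is to prove \cref{thm:LinkedTD:Psi:Copy} by a level-by-level recursive construction of the decomposition tree, following the strategy outlined in \cref{section:ProofSketch}, and then to fix linkedness by a final edge-contraction. Since $\Psi$ is $G_\delta$ in $|G|$, fix an increasing sequence $X_1 \subseteq X_2 \subseteq \cdots$ of closed subsets of $V(G) \cup \Omega(G)$ with $\bigcup_{n} X_n = V(G) \cup \Xi$, where $\Xi := \Omega(G) \setminus \Psi$. I will build a sequence of rooted \td s $(T^i, \cV)$ of induced subgraphs $G^i \subseteq G$ with $T^i \subseteq T^{i+1}$, maintaining the invariants that $(T^i, \cV)$ is tight and componental with finite adhesion, that $G^i$ covers (the vertices of) $X_i$, that for every leaf $t$ of $T^i$ the corresponding component $C$ of $G - G^i$ satisfies $V_t \supseteq N(C)$ and $N(C) = N_{G[V(C) \cup N(C)]}$ of the relevant region, and — crucially for linkedness — that every adhesion set that arose as a separator $A \cap B$ of a separation in one of the stars $\sigma_C$ produced by \cref{algo:RegionAlgo} does not get ``hidden'' inside a later bag, i.e.\ it reappears as an adhesion set deeper in the tree.

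Here are the key steps. \textbf{Step 1 (the recursion).} Given $(T^i, \cV)$ and a leaf $t$ with associated component $C$ of $G - G^i$, work inside $H := G[V(C) \cup N(C)]$, set $X := N(C)$, and run \cref{algo:RegionAlgo} on $H$, $X$, and a suitable set $\cD$ of previously produced non-touching end-linked regions sitting inside $C$; this yields a star $\sigma_C$ of separations of order $< |X|$ with $X \subseteq \interior(\sigma_C)$, whose separators are linked to $X$ by \cref{observation:algoLinkedEnds}\ref{property:linkedToX}, and which catches every $\Psi$-end separable from $X$ by a separation of order $< |X|$ via \cref{observation:algoLinkedEnds}\ref{property:everyEndInSomeC_i}. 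Then form $U_C := (V(X_{i+1}) \cap \interior(\sigma_C)) \cup \bigcup\{A \cap B : (A,B) \in \sigma_C,\ \cl(B) \cap X_{i+1} \neq \emptyset\}$ and let $V_t := U_C^*$ be an envelope of $U_C \cup N(C)$ chosen, via \cref{lem:Envelopes:Final} applied with $\cD$ the regions $H[B \setminus A]$ for $(A,B) \in \sigma_C$ with $\cl(B) \cap X_{i+1} = \emptyset$, so that $V_t = U_C^* \subseteq \interior(\sigma_C)$. \textbf{Step 2 (adding new leaves).} For each component $C'$ of $H - V_t$, add a new child $t_{C'}$ of $t$; by construction $N(C') \subseteq V_t$ (one checks $N(C')$ is either $N_H$ of a piece inside some $H[B \setminus A]$, in which case it equals the corresponding separator $A \cap B \subseteq \interior(\sigma_C)$, or it is contained in the envelope's finite adhesion set), so the invariants are preserved; tightness and componentality come from choosing the envelope carefully and from the structure of the algorithm's regions. \textbf{Step 3 (the limit \td).} Let $(T, \cV) := \bigcup_i (T^i, \cV)$. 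Since $\bigcup_n X_n = V(G) \cup \Xi$ and every bag is an envelope of a subset of some $X_i$, the closures of the bags contain exactly the ends in $\Xi$, so by \cref{lem:compDisplaysBoundary} the \td\ $(T, \cV)$ displays $\Psi$, it has finite adhesion, and it is tight and componental. \textbf{Step 4 (linkedness via contraction).} The remaining defect is that adhesion sets arising from the envelope step (rather than from a star $\sigma_C$) may violate the linkedness inequality. I will argue that every adhesion set coming from a separator of a $\sigma_C$ is linked both upward and downward (using \ref{property:linkedToX}, the fact that $\Psi$-ends get separated off, and that $V_t \subseteq \interior(\sigma_C)$ so the separator is not hidden but reappears below), and then contract all edges of $T$ whose adhesion sets are of ``envelope type''. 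The resulting \td\ remains tight, componental, of finite adhesion, still displays $\Psi$, and now the surviving adhesion sets are exactly the $\sigma_C$-separators, between which the min-adhesion along any comparable pair of edges is witnessed by disjoint paths — giving linkedness; by \cref{lem:LinkedTightCompDisplaysDomAndDegrees} it then also displays all dominating vertices and combined degrees.

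The main obstacle I expect is \textbf{Step 4}, verifying linkedness after contraction. The subtlety is that linkedness is a condition on \emph{all} comparable pairs of edges along a ray in $T$, and after contracting the envelope edges one must show that between any two surviving edges $e <_T f$ the minimum order over adhesion sets on the $eTf$-path is realized by a family of disjoint $V_e$–$V_f$ paths. The ingredients are: each $V_e$ (a separator $A\cap B$ of some $\sigma_C$) is linked to the vertex set $X$ that was the input of the algorithm producing it, i.e.\ to the bag just above it; and each such $V_e$ sits in the interior of the star of the next algorithm call below, so it is linked down into the next separator as well. Chaining these local linkings along the path — and handling the interaction with the intervening (now contracted) envelope bags, and the possibility that the min-order edge is not an endpoint of the path — is where the real work lies; one likely needs the componentality and tightness to concatenate the path systems without losing disjointness, together with a careful bookkeeping argument (analogous to the one in \cite{LinkedTDInfGraphs}) that the separators appear in a monotone nested fashion so that an ``innermost smallest'' separator along the path can be linked simultaneously to both endpoints.
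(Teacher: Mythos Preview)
Your overall strategy matches the paper's: build the tree level-by-level using \cref{algo:RegionAlgo} plus envelopes, then repair linkedness by contracting edges. Two genuine gaps remain.

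\textbf{Ends of $X_i$ must go into the envelope.} Your $U_C$ collects only the \emph{vertices} of $X_{i+1}$ lying in $\interior(\sigma_C)$, together with certain separators. But you must also feed in the \emph{ends} of $X_{i+1}$ that lie in the boundary $\partial_G(\interior(\sigma_C))$; the paper does this via $U_{t_C,2} := X_i \cap \partial_G(I_{t_C})$. Without it, your claim in Step~3 that ``the closures of the bags contain exactly the ends in $\Xi$'' fails in the crucial direction: an end $\xi\in\Xi$ that lives in the interior of some $\sigma_C$ (i.e.\ in no region picked by the algorithm) need not lie in the closure of $U_C$, since $\xi\in X_{i+1}$ does not by itself produce a comb with teeth in $V(X_{i+1})\cap\interior(\sigma_C)$. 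Such a $\xi$ would then slip through every bag and land in the boundary of $(T,\cV)$, so $(T,\cV)$ would not display $\Psi$. (Relatedly, to get $V_t\subseteq\interior(\sigma_C)$ you must apply \cref{lem:Envelopes:Final} with \emph{all} the algorithm's regions, not just those whose closure avoids $X_{i+1}$; their closures still avoid $U_C\cup N(C)$, so this is legitimate.)

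\textbf{Linkedness is not obtained by chaining.} Your plan to ``chain local linkings along the path'' does not work: disjoint $V_e$--$V_{e_1}$ paths and disjoint $V_{e_1}$--$V_{e_2}$ paths need not concatenate to disjoint $V_e$--$V_{e_2}$ paths. The paper's argument is different and avoids concatenation entirely. It records, for each edge $e$, a set $\cD_e$ of end-linked $(<|V_e|)$-regions in $G\up e$ with three properties: every end in $G\up e$ not linked to $V_e$ lives in some $D\in\cD_e$; every $N(D)$ is linked to $V_e$; and every $D\in\cD_e$ equals $G\strictup f$ for some $f>_T e$. The contraction keeps exactly the edges $f$ with $V_f$ linked to some end in $G\up f$ (your algorithm-type edges form a subset of these, and would in fact also work). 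For linkedness between surviving edges $e<_T e'$, take the $\le_T$-minimal $f$ on $eTe'$ of minimum adhesion and suppose by Menger there is a $V_e$--$V_{e'}$ separator $S$ of size $<|V_f|$. Since $V_{e'}$ is linked to some end $\eps$ in $G\up e'$, the set $S$ shows $V_e$ is not linked to $\eps$; hence some $D\in\cD_e$ is $\eps$-linked, and $D=G\strictup g$ for an edge $g$ with $e\le_T g<_T e'$. Then $|V_g|<|V_e|$ and, because the $|V_g|$ disjoint $V_g$--$\eps$ paths all cross $V_{e'}$, also $|V_g|\le|V_{e'}|$; so $g$ beats $f$, a contradiction. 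No path-family concatenation is ever needed.
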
   

As described in the proof sketch in \cref{section:ProofSketch}, we construct the desired \td\ recursively, where in each step, we have a \td\ $(T^i, \cV)$ of some subgraph $G^i$ of $G$, which we then extend into the components of $G-G^i$. For this, we add for every component $C$ of $G-G^i$ a new node $t_C$ to $T^i$, run \cref{algo:RegionAlgo} in $G[V(C) \cup N(C)]$ on input $X:=N(C)$, and then choose a suitable bag $V_{t_C}$ in $(N(C) \cup V(C)) \setminus (\bigcup \cC)$ where $\cC$ is the output of \cref{algo:RegionAlgo}.
In later steps, we will ensure that all regions in $\cC$ appear as $G\strictup f$ of some edge $f$ of $T$ above $t_C$ (cf.\ \ref{Lproperty:pathLikeRegions} of \cref{theorem:displayPrescribed} below).

The output of this procedure can be summarized as follows (note that the sets $\cD_e$ are essentially outputs $\cC$ of \cref{algo:RegionAlgo} as described above, and property \ref{item:EpsLinkedRegions} and \ref{item:LinkedNhoods} are \ref{property:everyEndInSomeC_i} and \ref{property:linkedToX} from \cref{observation:algoLinkedEnds}, which follow from \cref{algo:RegionAlgo}):

\begin{lemma} \label{theorem:displayPrescribed}
     Let $\Psi \subseteq \Omega(G)$ be any $G_{\delta}$-set of ends in some graph $G$. 
     Then $G$ admits a tight and componental rooted \td\ $(T, \cV)$ of finite adhesion displaying $\Psi$ together with a collection $(\cD_e)_{e \in E(T)}$ of sets $\cD_e$ of $(<|V_e|)$-regions in $G\up e$ such that for every edge $e \in E(T)$:
    \begin{enumerate}[label=\rm{(\arabic*)}]
    \item \label{item:EpsLinkedRegions} if an end $\eps$ living in $G\up e$ is not linked to $V_e$, then some region in $\cD_e$ is $\eps$-linked,
    \item \label{item:LinkedNhoods} the neighbourhood $N(D)$ of every region $D \in \cD_e$ is linked to $V_e$, and
    \item \label{Lproperty:pathLikeRegions} for every $D \in \cD_e$ there is a (unique) edge $f \in T$, with $e <_T f$, such that $D = G \strictup f$.
\end{enumerate}
\end{lemma}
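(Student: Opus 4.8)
I would construct the \td\ $(T,\cV)$ together with the collection $(\cD_e)_{e\in E(T)}$ by the level-by-level recursion sketched in \cref{section:ProofSketch}, maintaining at stage $i$ a finite-or-countable rooted tree $T^i$ (a prefix of the final $T$, i.e.\ $T^i\subseteq T^{i+1}$ and every leaf of $T^i$ has height $i$) together with bags $V_t$ for $t\in T^i$, an induced subgraph $G^i:=\bigcup_{t\in T^i}G[V_t]$, and for every non-root edge $e$ already present the set $\cD_e$, so that the following invariants hold: $(T^i,\cV\!\restriction\!T^i)$ is a tight, componental rooted \td\ of $G^i$ of finite adhesion; $X_i\cap V(G)\subseteq V(G^i)$ (so the $X_n$-sets of the $G_\delta$-presentation get covered); for every leaf $t$ of $T^i$ and every component $C$ of $G-G^i$ with $N(C)\subseteq V_t$ there is a bag-node containing $N(C)$ (so the next level can attach); and properties \ref{item:EpsLinkedRegions}--\ref{item:LinkedNhoods} already hold for all edges of $T^i$, while \ref{Lproperty:pathLikeRegions} holds ``so far'' in the sense that each region in $\cD_e$ is either $G\strictup f$ for an edge $f$ already constructed with $e<_Tf$, or is a component of $G-G^i$ waiting to become a bag-node at the next stage. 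The recursion starts with $T^0$ a single root node whose bag is $V(X_1)\cap V(G)$ (or, to get finite adhesion immediately, an envelope of $X_1$; in fact $X_1$ may be taken empty after re-indexing, so $T^0$ is the root with empty bag and $G^0$ empty).

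\textbf{The recursion step.}
Given $(T^i,\cV\!\restriction\!T^i)$ and a component $C$ of $G-G^i$, work entirely inside $H_C:=G[V(C)\cup N(C)]$ and put $X:=N(C)$, which is finite since the \td\ of $G^i$ has finite adhesion. I would first run \cref{algo:RegionAlgo} in $H_C$ on input $X$ with $\cD=\emptyset$, obtaining a pairwise-nested, pairwise-non-touching family $\cC_C=(C_j:j<\kappa)$ of end-linked $(<|X|)$-regions of $H_C$ disjoint from $X$, and I would set $\cD_{e}:=\{\,\subseteq\text{-maximal members of }\cC_C\,\}$ for the edge $e$ joining $t_C$ to its parent. (These maximal members are pairwise non-touching by the nestedness output of the algorithm, and the remark after \cref{algo:RegionAlgo} guarantees that every member of $\cC_C$ lies inside a maximal one and every component of $H_C-\bigcup\cC_C$ is some $C_j$.) Next, let $U_C$ be the union of $X_{i+1}\cap\interior(\sigma_C)$ together with $N_{H_C}(D)$ over all $D\in\cC_C$ whose ``right side'' $\cl(D)$ meets $X_{i+1}$; here $\sigma_C$ denotes the star of separations $\{(V(H_C)\setminus V(D),\,V(D)\cup N(D)):D\in\cC_C\}$ so that $\interior(\sigma_C)=V(H_C)\setminus\bigcup\cC_C$. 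Apply \cref{lem:Envelopes:Final} inside $H_C$ to the set $U_C\cup N(C)$ (of vertices and ends) with $\cD:=\cC_C$: the closures of the regions in $\cC_C$ avoid $U_C\cup N(C)$ by construction of $U_C$ (a region $D\in\cC_C$ is included in $\interior$ of no larger separation whose boundary meets it, and $D$ itself is disjoint from $X=N(C)$), so we obtain an envelope $V_{t_C}:=U_C^\ast\subseteq\interior(\sigma_C)$ with $\partial_{H_C}(V_{t_C})=\cl_{H_C}(U_C\cup N(C))\cap\Omega(H_C)$ and finite adhesion in $H_C$. Add the node $t_C$ with this bag, joined to a node of $T^i$ containing $N(C)$ (which exists by the induction hypothesis). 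Doing this for all components $C$ of $G-G^i$ simultaneously yields $T^{i+1}$. The key point, which \cref{lem:Envelopes:Final} buys us, is $V_{t_C}\subseteq\interior(\sigma_C)$: no separator $N(D)$ of a region $D\in\cC_C$ is swallowed by the bag $V_{t_C}$, and since every component of $H_C-V_{t_C}$ is either some region $D\in\cC_C$ or sits inside one (finite adhesion of the envelope bounds which), the adhesion sets of the new edges at $t_C$ are exactly the $N(D)$ for the maximal $D$, so in the next stage each such $D$ indeed becomes $G\strictup f$ for the edge $f$ above $t_C$ leading into it, discharging \ref{Lproperty:pathLikeRegions}.

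\textbf{Verification and the main obstacle.}
After the recursion, set $(T,\cV):=\bigcup_i(T^i,\cV\!\restriction\!T^i)$. That $(T,\cV)$ is a rooted \td\ of $G$ of finite adhesion is immediate from the invariants (\ref{defn:T1} from $\bigcup_i G^i=G$, which holds because $\bigcup_n X_n\supseteq V(G)$; \ref{defn:T3} because bags of later stages only add vertices inside components of $G-G^i$); tightness and componentality are maintained stagewise because each new part $G[V_{t_C}]$ is built as an envelope whose complement-components in $H_C$ are the connected regions $D\in\cC_C$ (one needs here that $G\strictup f=D$ is connected, true as $D$ is a region, and that $N(D)=V_f$, true as $V_{t_C}\subseteq\interior(\sigma_C)$ forces $N_{H_C}(D)\subseteq V_{t_C}$ and conversely no vertex outside $N_{H_C}(D)$ of $V_{t_C}$ neighbours $D$). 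That $(T,\cV)$ displays $\Psi$ follows from \cref{lem:compDisplaysBoundary} once we check the boundary of $(T,\cV)$ equals $\Psi$: an end $\eps\in\Xi=\Omega(G)\setminus\Psi$ lies in some $X_n$, hence in $V(G^n)$-closure sense it is captured by a bag at stage $\le n$ (here one uses $\partial(V_{t_C})=\cl(U_C\cup N(C))\cap\Omega$ and that $V(X_{i+1})\cap V(C)\subseteq U_C$, so the closures of the bags contain exactly the ends of $\Xi$), while an end of $\Psi$ is never captured and so defines a ray $R_\eps$ in $T$. Finally \ref{item:EpsLinkedRegions} and \ref{item:LinkedNhoods} for an edge $e$ at $t_C$ are literally \ref{property:everyEndInSomeC_i} and \ref{property:linkedToX} of \cref{observation:algoLinkedEnds} applied to the run of \cref{algo:RegionAlgo} in $H_C$, once one observes that an end living in $G\up e$ corresponds to an end of $H_C$ and that linkedness to $V_e=N(C)=X$ is the same notion computed in $H_C$ (separators of $\eps$ from $X$ inside $H_C$ are the same as in $G$ because $C(S,\eps)\subseteq C$ for such $S$). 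The main obstacle I anticipate is the bookkeeping that makes \ref{Lproperty:pathLikeRegions} genuinely consistent across all stages at once: one must ensure that the regions in $\cD_e$ produced by the algorithm at stage $i$ are precisely the strict-upper parts of the \emph{first} edges on the relevant rays — equivalently, that running \cref{algo:RegionAlgo} afresh inside each component $D$ at stage $i{+}1$ does not destroy the nestedness with the $\cD_e$ from stage $i$ — and for this one wants the output regions of the algorithm in $H_C$ to be inclusion-maximal among $(<|X|)$-regions disjoint from $X$, which is exactly what the ``nicest'' condition \ref{nicest:inclusion-wisemaximal} together with the remark after \cref{algo:RegionAlgo} (unions of candidate chains are candidates) provides; getting this interface exactly right, and confirming that $G\strictup f$ being connected and the envelope lying in $\interior(\sigma_C)$ together pin down $V_f=N(D)$, is where the care is needed.
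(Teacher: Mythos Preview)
Your overall strategy---level-by-level construction, running \cref{algo:RegionAlgo} on $X=N(C)$ inside each component, then taking an envelope that avoids the algorithm's output---is the paper's strategy. But two concrete steps fail as written.

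\textbf{Components of $H_C-V_{t_C}$ need not be the maximal regions.} You assert that ``every component of $H_C-V_{t_C}$ is either some region $D\in\cC_C$ or sits inside one'' and hence that the new adhesion sets are exactly the $N(D)$ for maximal $D$. This is false. Your $U_C$ only absorbs $N(D)$ when $\cl(D)\cap X_{i+1}\neq\emptyset$; for a maximal $D$ with $\cl(D)\cap X_{i+1}=\emptyset$ the envelope need not contain $N(D)$, so the component of $H_C-V_{t_C}$ containing $D$ can be strictly larger than $D$, picking up further vertices of $\interior(\sigma_C)\setminus V_{t_C}$. Then $D$ is not $G\strictup f$ for the next edge $f$, and since you do not pass $D$ on to the next level, \ref{Lproperty:pathLikeRegions} is lost. (You also cannot simply absorb \emph{all} $N(D)$ into $U_C$: a limit of such separators can be an end of $\Psi$, which would land in $\partial(V_{t_C})$ and destroy ``displays $\Psi$''.)

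\textbf{Maximal regions are not enough for \ref{item:EpsLinkedRegions}.} You set $\cD_e$ to be only the $\subseteq$-maximal members of $\cC_C$ and then invoke \cref{observation:algoLinkedEnds}~\ref{property:everyEndInSomeC_i}. But that observation produces \emph{some} $C_i$ that is $\eps$-linked; this $C_i$ may well be non-maximal, and the maximal region $D\supsetneq C_i$ has $|N(D)|>|N(C_i)|$, so $N(D)$ need not be linked to $\eps$ (take $\eps$ of combined degree $|N(C_i)|$). Hence \ref{item:EpsLinkedRegions} fails for your $\cD_e$.

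The paper repairs both points simultaneously by \emph{propagating} regions: one takes $\cD_f:=\cD'_f\cup\cC_f$, where $\cD'_f$ consists of all regions of the previous $\cD_e$ that are properly contained in the current component $C$, and one runs \cref{algo:RegionAlgo} with input $\cD=\cD'_{f,\max}$ (not $\emptyset$). A short argument shows that this concatenation is again a valid output of the algorithm on $N(C)$, so \ref{item:EpsLinkedRegions} and \ref{item:LinkedNhoods} follow directly from \cref{observation:algoLinkedEnds}. For \ref{Lproperty:pathLikeRegions} one then argues termination: since each bag avoids all of $\cD_f$, a region $D\in\cD_e$ that is not yet realised as some $G\strictup f$ is passed on to $\cD'_{f}$ at the next level; if this went on forever, any fixed vertex of $D$ would lie in $G\strictup f$ for all $f$ along a ray of $T$, contradicting $\bigcup_i G^i=G$. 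This bookkeeping is precisely the obstacle you flag at the end, and it is not handled by \ref{nicest:inclusion-wisemaximal} alone.
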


\begin{proof}
    We may assume that $G$ is connected and that $\Psi$ is non-empty. 
    Since $\Psi \subseteq \Omega(G)$ is a $G_\delta$-set, and hence a countable intersection of open sets in $V(G) \cup \Omega(G)$, its complement is a countable union of closed sets. This means that there exists a sequence $X_1 \subseteq X_2 \subseteq X_3 \dots$ of closed subsets of $V(G) \cup \Omega(G)$ such that $\bigcup_{n \in \mathbb{N}} X_n = V(G) \cup \Xi$ where  $\Xi := \Omega(G) \setminus \Psi$ is the set of ends outside of $\Psi$.

    We construct the \td\ $(T, \cV)$ recursively.
    More precisely, we recursively choose induced subgraphs $G^1 \subset G^2 \subset \dots$ of $G$ together with \td s $(T^i, \cV^i)$ of $G^i$ of finite adhesion which all extend each other in the sense that $T^1 \subseteq T^2 \subseteq \dots$ and $V_t^i = V_t^j$ for all $i \leq j$ and $t \in T^i$.  In particular, from now on we simply write $V_t$ for the part corresponding to a vertex $t \in T^i$. The desired \td\ will then be the limit of these \td s, i.e.\ $T := \bigcup_{i \in \N} T^i$ and $\cV=\{V_t \colon t \in T\}$. Note that we will show $G = \bigcup_{i \in \N} G^i$, so $(T, \cV)$ will be a \td\ of $G$.
    
    Additionally, for every $i \in \N$, we choose a collection $(\cD_e)_{e \in E(T^i)}$ of end-linked $\lA$-regions of $G$. These collections $\cD_e$ will essentially be outputs of \cref{algo:RegionAlgo} inside $G \up e$ on input $V_e$, where these notions are taken with respect to the limit $(T, \cV)$ of the \td s $(T^i, \cV)$. 
    
    In what follows, we first make this construction precise, and then show that the arising \td\ $(T, \cV)$ is as desired.
    During the construction, we will ensure in each step $i \in \N$ that the following conditions hold:
    
    \begin{enumerate}
        \item \label{theorem:displayPrescribed:item:finiteAdhesion} $G^i$ has finite adhesion in $G$,
        \item \label{theorem:displayPrescribed:item:componental} for every component $C$ of $G-G^i$, its neighbourhood $N(C)$ is contained in a bag $V_{l_C}$ for a unique node $l_C \in T^i$ at height $i$, and 
        \item \label{theorem:displayPrescribed:item:outputAlgo} if $f$ is the unique edge below $l_C$, the set $\cD_f$ is a potential output of \cref{algo:RegionAlgo} inside the graph $G[V(C) \cup N(C)]$ on input $N(C)$.
    \end{enumerate}
    
    We start the construction by choosing an arbitrary vertex $x \in V(G)$, and letting $G^1 := (\{x\}, \emptyset)$ be the graph on the single vertex $x$. Further, we let $T^1$ be the unique rooted tree on a single node $r$ and set $V_r := \{x \}$ and $\cD_r := \emptyset$. 
    
    Now let $i > 1$ be given, and assume we have already constructed a \td~$(T^i, \cV)$ of $G^i$ together with a collection $(\cD_e)_{e \in E(T^i)}$ of end-linked $\lA$~regions satisfying \cref{theorem:displayPrescribed:item:finiteAdhesion}, \cref{theorem:displayPrescribed:item:componental} and \cref{theorem:displayPrescribed:item:outputAlgo}.
    We let \defn{$T^{i+1}$} be the tree obtained from $T^i$ by adding, for every leaf $l$ of $T^i$ at height $i$ and for every component $C$ of $G-G^i$ with $N(C) \subseteq V_l$, a new node $t_C$ neighbouring $l$. Note that by \cref{theorem:displayPrescribed:item:componental}, we add for every component $C$ of $G-G^i$ a (unique) leaf to $T^{i+1}$. 
    Essentially, $C$ will equal $G \strictup f$ in the limit of our \td s, where $f$ is the unique edge below $t_C$.
    \smallskip

    We now choose the bags for all newly added leaves of $T^{i+1}$. For this, let $C$ be some component of $G-G^i$, and let $t_C$ be the newly added leaf for $C$. Further, let $f$ be the unique edge below $t_C$. If $i > 2$, let $e$ be the unique edge below $f$, and define $\cD'_f := \{ D \in \cD_e \colon D \subsetneq C \}$. Further, let $\cD'_{f, \max}$ be the set of $\subseteq$-maximal regions among all regions in $\cD'_f$.
    
    We run \cref{algo:RegionAlgo} inside the graph $G[V(C) \cup N(C)]$ on input $X:= N(C)$, which is finite by \ref{theorem:displayPrescribed:item:finiteAdhesion}, and with $\cD$ $:= \cD'_{f, \max}$, if $i > 2$, or $\cD := \emptyset$ if $i = 2$. 
    For this, we need to check that the set $\cD'_{f, \max}$ is a valid input for \cref{algo:RegionAlgo}, i.e.\ $\cD'_{f, \max}$ is a collection of pairwise non-touching end-linked $(<|N(C)|)$-regions that are disjoint from $N(C)$. By \cref{theorem:displayPrescribed:item:outputAlgo}, $\cD'_{f, \max}$ is a collection of pairwise non-touching end-linked $(<\aleph_0)$-regions, and they are disjoint from $N(C)$ as they are contained in $C$ by definition. Moreover, since $C \in \cD_e$ and because every region in $\cD_e$ got picked $\subseteq$-maximal (by \cref{nicest:inclusion-wisemaximal}), the size of neighbourhoods of regions in $\cD'_{f, \max}$ must be strictly less than $|N(C)|$.
    
    Let us denote the output by $\cC_{f}$ and the $\subseteq$-maximal regions among all regions in $\cC_{f}$ by $\cC_{f, \max}$. Let $I_{t_C} := (V(C) \cup N(C)) \setminus \bigcup \cC_f$, and let (see also \cref{fig:prescribedConstruction})
    
    \begin{itemize}
        \item[$\bullet$] $U_{t_C,1} := X_{i} \cap I_{t_C}$,
        \item[$\bullet$] $U_{t_C,2} := X_{i} \cap \partial_G(I_{t_C})$, and
        \item[$\bullet$] $U_{t_C,3} := \bigcup \{ N(D) \colon D \in \cC_{f,\max}, X_{i} \cap \cl(D) \neq \emptyset \}$.
    \end{itemize}

    \begin{figure}[htbp]
    \centering
    \includegraphics[scale=0.55]{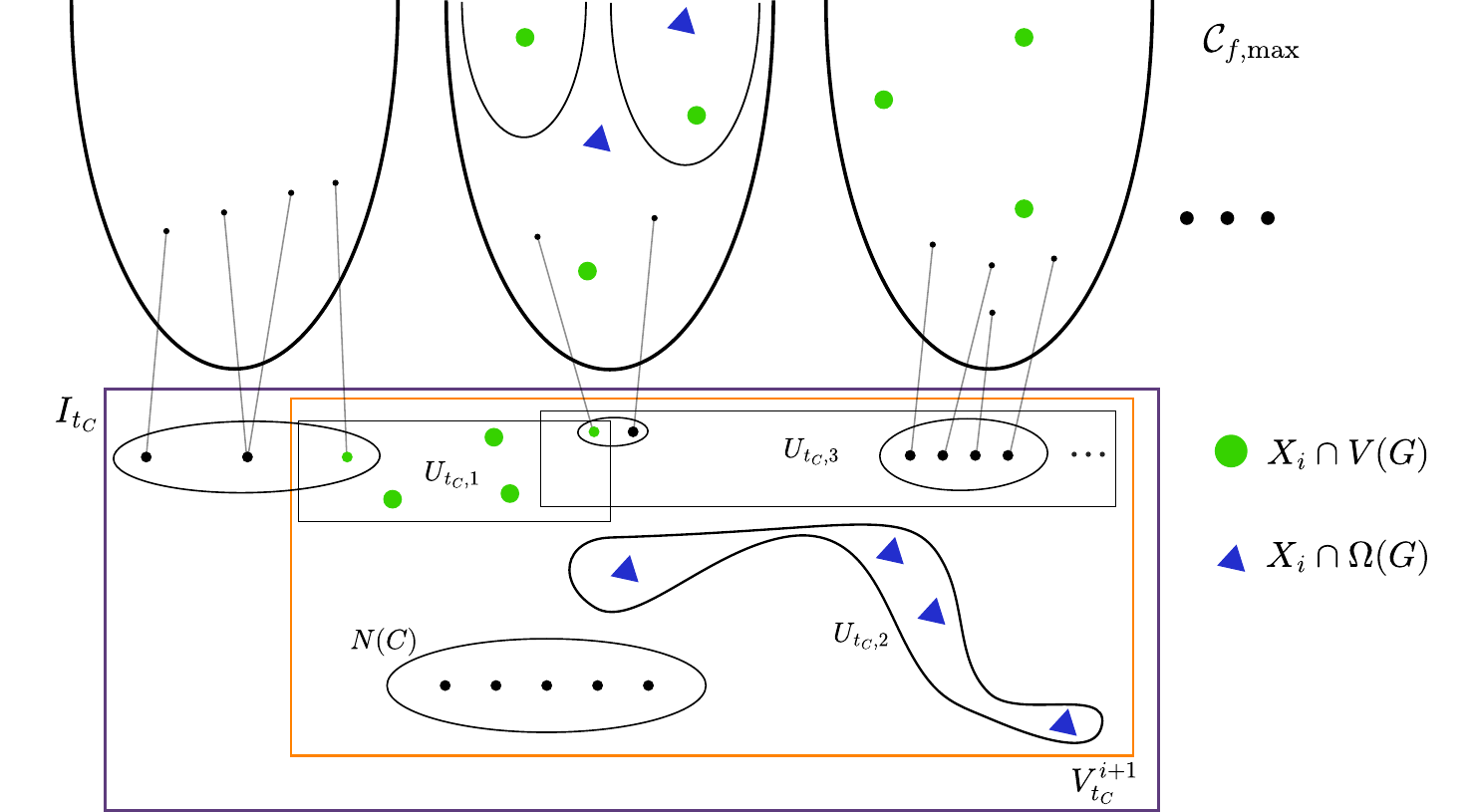}
    \caption{Depicted is the graph $G[V(C) \cup N(C)]$. The components are maximal components chosen by \cref{algo:RegionAlgo}. The orange subgraph is an envelope of $U_{t_{C,1}} \cup U_{t_{C_,2}} \cup U_{t_{C,3}} \cup N(C)$. This will be the bag associated with $t_C$.}
    \label{fig:prescribedConstruction}
    \end{figure}

   Applying \cref{lem:Envelopes:Final} 
   now yields our bag for $t_C$: 
   We let $\defnm{V^{i+1}_{t_C}} := (U_{t_C,1} \cup U_{t_C,2} \cup U_{t_C,3} \cup N(C))^*$ be some envelope in $G[V(C) \cup N(C)]$ that avoids $\bigcup \cD'_f \cup \bigcup \cC_f$ (which exists by \cref{lem:Envelopes:Final}). 
   
   Lastly, we define $\defnm{\cD_{f}} := \cD'_f \cup \cC_f$, and after having defined all new bags we let $\defnm{G^{i+1}} := G[\bigcup_{t \in T^{i+1}} V_t]$. This completes the construction of $(T^{i+1}, \cV^{i+1})$.
   \smallskip
   
   Next, we verify that $(T^{i+1}, \cV^{i+1})$ satisfies \cref{theorem:displayPrescribed:item:finiteAdhesion}, \cref{theorem:displayPrescribed:item:componental} and \cref{theorem:displayPrescribed:item:outputAlgo}. For this, let $C'$ be any component of $G-G^{i+1}$. Since $G^i \subseteq G^{i+1}$, there exists a unique component $C$ of $G-G^i$ with $C' \subseteq C$. Let further $t_C \in T^{i+1}$ be the (unique) node at height $i+1$ that we added to $T^{i+1}$ for $C$. Then $N(C') \subseteq V_{t_C}$ as $N(C) \subseteq V_{t_C}$ and because $V_{t_{\tilde{C}}} \subseteq \tilde{C}$ for every component $\tilde{C}$ of $G^i$. Thus, $(T^{i+1}, \cV^{i+1})$ satisfies \ref{theorem:displayPrescribed:item:componental}. Since $V_{t_C}$ has finite adhesion inside the graph $G[V(C) \cup N(C)]$, the neighbourhood $N(C')$ of $C'$ is finite. Hence, $(T^{i+1}, \cV^{i+1})$ also satisfies \cref{theorem:displayPrescribed:item:finiteAdhesion}. For \ref{theorem:displayPrescribed:item:outputAlgo}, let $f$ be the unique edge below $t_C$. We have to show that

   \begin{claim} \label{claim:regionsAlgo}
    The set $\cD_f$ is a potential output of \cref{algo:RegionAlgo} inside the graph $G[V(C) \cup N(C)]$ on input $X:= N(C)$ (and $\cD := \emptyset$).  
\end{claim}
\begin{proof}\renewcommand{\qedsymbol}{$\blacksquare$}
    If $i=2$, the claim holds by definition. If $i>2$, let $e=sl_C$ be the unique edge below $f$ and $C_{s}$ the unique component of $G-G^{i-1}$ corresponding to $s$. By \cref{theorem:displayPrescribed:item:outputAlgo}, the collection $\cD_e$ is a potential output of \cref{algo:RegionAlgo} inside the graph $G[V(C_s) \cup N(C_s)]$ on input $N(C_s)$ (and $\cD = \emptyset$). Hence, $\cD_e=(A_i \colon i< \kappa_1)$ where the $A_i$ are enumerated in the order they have been picked by \cref{algo:RegionAlgo}. Moreover, by construction, $\cC_f=(B_i \colon i< \kappa_2)$ is an output of \cref{algo:RegionAlgo} inside the graph $G[V(C) \cup N(C)]$ on input $X:=N(C)$ and $\cD:=\cD'_{f, \max}$. 
    
    We claim that the concatenation of first picking every region in $\cD'_f = \{ D \in \cD_e \colon D \subsetneq C \}$ in the order induced by $\cD_e = (A_i : i < \kappa_1)$, and then picking every region in $\cC_f$ in the order $(B_i : i < \kappa_2)$, yields a potential output of \cref{algo:RegionAlgo} in the graph $G[V(C) \cup N(C)]$ on input $N(C)$ (and $\cD=\emptyset$). 

    Indeed the $A_i$'s are nested, disjoint from $N(C)$, and end-linked, and each of them is a nicest such region because $\cD_e$ was a potential output of \cref{algo:RegionAlgo} on input $N(C_s)$. In particular, by \cref{nicest:inclusion-wisemaximal} and because every region in $\cD_e$ is nested with $C$, the $A_i$'s (which are strictly contained in $C$) are $(<|N(C)|)$-regions. So the $A_i$'s are also nicest regions (at step $i$, respectively) when running \cref{algo:RegionAlgo} inside $G[V(C) \cup N(C)]$ on input $N(C)$. Hence, $(A_i : i < \kappa_1)$ is an initial sequence (up to $\kappa_1$) of a potential output of \cref{algo:RegionAlgo} inside $G[V(C) \cup N(C)]$ on input $N(C)$ (and $\cD = \emptyset)$. 
    As $\cC_f$ is an output of \cref{algo:RegionAlgo} inside $G[V(C) \cup N(C)]$ on input $N(C)$ and $\cD'_{f, \max}$, which contains precisely all the maximal $A_i$, every $B_i$ is a nicest region (in the respective step) in a run of \cref{algo:RegionAlgo} (with $\cD = \emptyset$) which first picked $(A_i : i < \kappa_1)$.
    Hence, the concatenation is a valid output.
    \end{proof}
    It follows that $(T^{i+1}, \cV^{i+1})$ satisfies \cref{theorem:displayPrescribed:item:finiteAdhesion}, \cref{theorem:displayPrescribed:item:componental} and \cref{theorem:displayPrescribed:item:outputAlgo}.
    \smallskip

    Let $(T, \cV)$ be the limit of the $(T^i, \cV)$'s, i.e.\ $T := \bigcup_{i \in \N} T^i$ and $V_t := V_t^i$ for every node $t \in T$ where $i \in \N$ is such that $t \in T^i$. Note that $(T, \cV)$ is a \td\ of $\bigcup_{i \in \N} G^i$.
    Further, let $(\cD_e)_{e \in E(T)}$ be the collection of all $\cD_e$ for edges $e \in T$, where we note that during the construction of $(T, \cV)$ we chose for every edge $e \in T$ a unique collection $\cD_e$.
    We claim that $(T, \cV)$ and $(\cD_e)_{e \in E(T)}$ are as desired. In what follows, we check all required properties one by one.

    First, we show that $(T, \cV)$ is a \td\ of $G$, for which it suffices to prove that

\begin{claim} \label{lem:displayPrescribed:limitIsG}
        $\bigcup_{i \in \N} G^i = G$.
    \end{claim}
    \begin{proof}\renewcommand{\qedsymbol}{$\blacksquare$}
     Since all $G^i$ are induced subgraphs of $G$, it suffices to show that every vertex of $G$ is contained in some $G^i$. 
     Let $x \in V(G)$ be an arbitrary vertex, and let $n \in \mathbb{N}$ be large enough such that $x \in X_n$. If $x \in V(G^{n})$ then we are done. 
     
     Otherwise, let $C$ be the unique component of $G-G^{n-1}$ that contains $x$. By construction, we added a new node $t_C$ to $T^{n}$ with $U_{t_C,1} \cup U_{t_C,3} \subseteq V_{t_C}$. 
     Since $x$ is not contained in $G^{n}$, and hence not in $V_{t_C}$, it is in particular not an element of $U_{t_C,1}$. As $x \in X_n$, it follows that $x \notin I_{t_C}$, and hence $x$ is contained in a (unique) region $D \in \cC_{e,\max}$ where $e$ is the unique edge below $t_C$. 
     Again since $x \in X_n$, the neighbourhood $N(D)$ of $D$ is a subset of $U_{t_C,3}$, and hence, as $V_{t_C}$ avoids $\cC_e \ni D$, the region $D$ is a component of $G-G^{n}$. 
     Let $g \in T^{n+1}$ be the (unique) edge incident with $t_C$ that we added to $T^{n+1}$ for $D$ in step $n+1$. Then $|V_g|=|N(D)|<|V_e|$ by \cref{theorem:displayPrescribed:item:outputAlgo}. Thus, by repeating the above argument, it follows that after a finite number of steps $x$ must be contained in $G^m$ for some $m \in \N$ with $n < m \leq n+|V_e|-1$. 
\end{proof}

Next, we check that

\begin{claim} \label{claim:TightAndComponental}
$(T, \cV)$ is tight and componental and has finite adhesion. 
\end{claim}

\begin{proof}\renewcommand{\qedsymbol}{$\blacksquare$}
    Let $e \in E(T)$ be any edge of $T$. Let $i$ be the step in which we extended $T^i$ by the edge~$e$, and let $t$ be the (unique) node of $T$ at height $i+1$ that is incident with $e$. By construction $t = t_C$ for a (unique) component $C$ of $G-G^i$, and $V_e=N(C)$. 
    As $G^i$ has finite adhesion in $G$ by \ref{theorem:displayPrescribed:item:finiteAdhesion}, the adhesion set $V_e$ is finite. Thus, $(T, \cV)$ has finite adhesion.
    
    Moreover, every bag corresponding to a node above $t_C$ is a subset of $V(C) \cup N(C)$, and, conversely, no other bag meets $V(C)$. Hence $G \strictup e$ is exactly $C$ by \cref{lem:displayPrescribed:limitIsG}, which implies that $(T, \cV)$ is tight and componental.
\end{proof}

It remains to show that $(T, \cV)$ displays $\Psi$, and that $(T, \cV)$ and $(\cD_e)_{e \in E(T)}$ satisfy \ref{item:EpsLinkedRegions} to \ref{Lproperty:pathLikeRegions}. We first verify the latter, as \ref{item:EpsLinkedRegions} and \ref{Lproperty:pathLikeRegions} will be useful to prove that $(T, \cV)$ displays $\Psi$.

Let us remark that given a region $D \in \cD_e$ for some edge $e=st \in T$, it holds that $D \subseteq G - G^i$, where $i=\max\{\tht(s), \tht(t)\}$. More precisely, $D \subseteq G \strictup e \subseteq G-G^i$. Hence, \cref{item:EpsLinkedRegions} and \cref{item:LinkedNhoods} follow from \cref{claim:regionsAlgo} together with \cref{observation:algoLinkedEnds}~\ref{property:everyEndInSomeC_i} and \ref{property:linkedToX}. For \ref{Lproperty:pathLikeRegions}, we have to show that

    \begin{claim}
        For every region $D \in \cD_e$ there exists a (unique) edge $f \in T$ with $e <_T f$, such that $D= G \strictup f$.
    \end{claim}
    \begin{proof}\renewcommand{\qedsymbol}{$\blacksquare$}
        Let $st=e \in E(T)$ be an arbitrary edge. It suffices to check that
        \begin{enumerate}[label=($*$)]
            \item \label{lem:displayLinked:sufficientDisplayRegion}for every region $D \in \bigcup_{g>_T e} \cD_e \setminus \cD_g$, the set $N(D)$ is contained in a unique bag $V_g$, for an edge $g>_T f$ with $g \cap f \neq \emptyset$.
        \end{enumerate}
            Indeed, condition 
            \cref{lem:displayLinked:sufficientDisplayRegion} is sufficient: Every region $D \in \cD_{st}$ is disjoint to $G^i$, where $i := \max \{ \tht(s), \tht(t) \}$. Given any $e \in E(T)$ and $D \in \cD_e$, one can fix an arbitrary vertex $x \in D$. Now, whenever $D$ is contained in a set $\cD_{g}$ it follows that $D \subseteq G \strictup e$ and hence $x \in G \strictup e$. If $D$ does not appear as $G \strictup f$ for any $f>_T e$, by inductively leveraging \cref{lem:displayLinked:sufficientDisplayRegion}, one finds a rooted ray $R$ of $T$ satisfying $x \in G \strictup f$ for every edge $f \in E(R)$, a contradiction.

            But \cref{lem:displayLinked:sufficientDisplayRegion} is satisfied by construction: Assume $s<_T t$ and let $D \in \cD_e$ be a region that is no element of any set $\cD_g$, for any edge $g>_T e$. Since $V_t$ is disjoint to any region in $\cD_e$, the region $D$ is contained in a unique component $G \strictup tx$, for some node $x \in T$ with $x>_T t$. By assumption, $D \notin \cD'_{tx}$ and hence $D = G \strictup tx$.
    \end{proof}

Finally, we conclude the proof by showing that

\begin{claim}
        $(T, \cV)$ displays $\Psi$.
    \end{claim}
    \begin{proof}\renewcommand{\qedsymbol}{$\blacksquare$}
        By \cref{claim:TightAndComponental} and \cref{lem:compDisplaysBoundary}, it suffices to check that the boundary of $(T, \cV)$ is $\Psi$. For this, we have to show that the boundary of any bag of $(T, \cV)$ contains only ends in $\Xi$, and every end contained in the boundary of $(T, \cV)$ is an element of $\Psi$. 
        \smallskip
        
        Let us first check that the boundary of every bag $V_t$ is a subset of~$\Xi$. For $t = \rt(T)$ this is obvious as $V_{\rt(T)}$ is a singleton, and hence its boundary is empty.
        For any other node $t \in T$, its bag $V_t$ is an envelope of $U_{t,1} \cup U_{t,2} \cup U_{t,3}$, and hence $\partial(V_t) = \partial(U_{t,1} \cup U_{t,2} \cup U_{t,3})$.
        As the sets $X_i$ are closed and $U_{t,1}, U_{t,2} \subseteq X_i$, the closures of $U_{t,1}$ and $U_{t,2}$ contain only ends in $\Xi$. Hence, it remains to consider $U_{t,3}$.
        
        For this, let $\eps \in \partial(U_{t,3})$. Further, let $f$ be the unique edge below $t$, and let $m$ the step in which we added $t$ to $T^{m}$. 
        To show that $\eps \in \Xi$, it suffices to prove that there is no finite $\eps$--$X_m$ separator~$S$ in $G$ (since $\partial(X_m) \subseteq \Xi$.).
        To this end, let $S \subseteq V(G)$ be any finite set, and let $H$ be a comb attached to $U_{t,3}$ with spine in $\eps$. 

        We need to show that $X_m$ meets the closure of the component $C(S, \eps)$ of $G-S$ in which $\eps$ lives. Since $S$ is finite, some subcomb $H'$ of $H$ is contained in $C(S, \eps)$. By definition of $U_{t,3}$ and because $H$ has all its teeth in $U_{t,3}$, every tooth of $H'$ is contained in the neighbourhood $N(D)$ of some region $D \in \cC_{f, \max}$ whose closure meets $X_m$. 
        Again since $S$ is finite and because the regions in $\cC_{f,\max}$ are pairwise disjoint, there is a tooth $x$ of $H'$ and a region $D \in \cC_{f,\max}$ such that $x \in N(D)$ and $D$ is disjoint to $S$ but its closure meets $X_m$.
        Since $x \in V(C(S, \eps)) \cap N(D)$ and because $D$ avoids $S$, we have that $D \subseteq C(S, \eps)$.
        As $\cl(D)$ meets $X_m$ and $D \subseteq C(S,\eps)$, it follows that $\cl(C(S,\eps))$ meets $X_m$, and hence $S$ does not separate $\eps$ from $X_m$. 
        This concludes the proof that the boundary of any bag of $(T, \cV)$ contains only ends in $\Xi$.
        \smallskip

        It remains to check that no end in $\Xi$ is contained in the boundary of $(T, \cV)$.
        For this, let $\xi \in \Xi$ be some end, and assume for a contradiction that $\xi$ lies in the boundary of $(T, \cV)$. Let $R_{\xi}$ be the rooted ray of $T$ corresponding to $\xi$. As $(T, \cV)$ and $(\cD_e)_{e \in E(T)}$ satisfy \cref{item:EpsLinkedRegions} and \cref{Lproperty:pathLikeRegions}, there are cofinally many edges $e \in R_{\xi}$ such that $V_e$ is linked to $\xi$. 
        Let $n \in \mathbb{N}$ be large enough such that $\xi \in X_n$, and let $e \in R_{\xi}$ be an edge such that $V_e$ is linked to $\xi$ and $e \notin T^n$. 
        It suffices to show that $\xi$ lives in no region in $\cD_e$, as then $\xi \in U_{t,2} \subseteq \partial(V_t)$ lives in the boundary of $V_t$, which yields the desired contradiction. 
        For this, recall that $\cD_e = \cD'_e \cup \cC_e$. As $\cC_f$ is an output of \cref{algo:RegionAlgo} on input $X = V_e$, all regions in $\cC_f$ are $(<|V_e|)$-regions. Since $V_e$ is linked to $\xi$, this implies that $\xi$ lives in no region in $\cC_f$. Similarly, also every region in $\cD'_e$ is an $(<|V_e|)$-region, as $\cD_f$, where $f$ is the unique edge in $T$ below $e$, is a potential output of \cref{algo:RegionAlgo}, and hence every region $D$ in $\cD_f$ with $D \subsetneq G\strictup e$ has a neighbourhood that is smaller than $V_e = N(G\strictup e)$. Hence, $\xi$ lives neither in a region in $\cD'_e$.
\end{proof}
    
    All in all, $(T, \cV)$ and $(\cD_e)_{e \in E(T)}$ are as desired. 
\end{proof}

We are now ready to prove \cref{thm:LinkedTD:Psi:Copy}.

\begin{proof}[Proof of \cref{thm:LinkedTD:Psi:Copy}]
    Let $(T, \cV)$ be the rooted \td\ provided by \cref{theorem:displayPrescribed} for~$G$ and $\Psi$.
    In particular, $(T, \cV)$ is tight and componental, and it displays $\Psi$.
    Let $L$ be the set of all edges $f \in T$ whose adhesion set $V_f$ is linked to some end living in $G\up f$. 
    Let $(T', \cV')$ be obtained from $(T, \cV)$ by contracting all edges of $T$ that are not in $L$, and letting $V'_{[t]} := \bigcup_{s \in [t]} V_s$ for every $t \in V(T)$.
    Clearly, $(T', \cV')$ is still a rooted \td, and it inherits the properties tight, componental and finite adhesion from $(T, \cV)$.
    We claim that $(T', \cV')$ is as desired, for which it remains to show that $(T', \cV')$ is linked and displays $\Psi$. 
    \smallskip
    
    We first check the latter. Since $(T', \cV')$ is tight and componental, it displays its boundary by \cref{lem:compDisplaysBoundary}. As we obtained $(T', \cV')$ from $(T, \cV)$ by contracting edges, its boundary is a subset of the boundary of $(T, \cV)$. Hence, as $(T, \cV)$ displays $\Psi$, it suffices to show that every end $\eps$ in the boundary of $(T, \cV)$ is also contained in the boundary of $(T', \cV')$. 
    This follows immediately once we have shown that cofinally many edges $e$ of $R_\eps$ (the (unique) rooted ray of $T$ whose end corresponds to $\eps$) are associated with an adhesion set $V_e$ that is linked to $\eps$ (as we did not contract those edges from $T$ to obtain $T'$). 
    
    So suppose for a contradiction that there is an edge $e$ of $R_\eps$ such that no adhesion set of an edge $e'$ above $e$ is linked to $\eps$, and let $e'$ be the next edge on $R_\eps$ after $e$. Then by \cref{item:EpsLinkedRegions}, some region $D \in \cD_{e'}$ is $\eps$-linked, and by \cref{Lproperty:pathLikeRegions}, $D = G\strictup f$ for some edge $f$ of $T$. In particular, $V_f$ is $\eps$-linked since $D$ is $\eps$-linked and $N(D) = V_f$ as $(T, \cV)$ is tight.
    Since $D \subseteq G\strictup e'$, it follows that $f$ lies above $e$ on~$R_\eps$, which contradicts the choice of~$e$.
    \smallskip

    It remains to check that $(T', \cV')$ is linked.
    Let $e \leq_{T} e' \in L$ be two comparable edges. We proceed by induction on the distance~$n$ between $e$ and $e'$ in $T$. For distance $n=0$ there is nothing to show. Now assume the distance $n$ is $>0$, and let $f \in L$ be the $\leq_{T}$-minimal edge with $e \leq_{T} f \leq_{T} e'$ and $|V_f| = \min \{ |V_g| \colon g \in E(eTe') \cap L \}$. If $f \neq e,e'$, then by the induction hypothesis, we find $|V_f|$ pairwise disjoint $V_e$--$V_f$ paths in $G \up e$ and $|V_f|$ pairwise disjoint $V_f$--$V_{e'}$ paths in $G \up f$. Combining the two path families yields our desired family of $V_e$--$V_{e'}$ paths.
     
    In the other case where $f \in \{e,e'\}$, assume for a contradiction that there is no family of $|V_f|$ pairwise disjoint $V_e$--$V_{e'}$ paths in $G \up e$. By Menger's Theorem, there is a $V_e$--$V_{e'}$ separator $S$ in $G \up e$ of size $<\min \{|V_e|, |V_e'| \}$. Since $e'\in L$, its adhesion set $V_{e'}$ is linked to an end $\eps$ living in $G \up e'$. Moreover, $V_e$ cannot be linked to $\eps$, as any $V_e$--$\eps$ path or ray meets $V_{e'}$ and hence $S$. So by \cref{item:EpsLinkedRegions}, some region $D \in \cD_e$ is $\eps$-linked, and by \cref{Lproperty:pathLikeRegions}, $D$ appears as $G \strictup g$ for an edge $g \in E(T')$. Since $D \subseteq G\strictup e$ as $D \in \cD_e$, it follows that $e \leq_{T} g$.
    Moreover, as $\eps$ lives in $G\up e'$, we have $g \leq_T e'$. In fact, $g <_T e'$ as the neighbourhood $N(D)$ of $D$ is linked to $V_e$ by \cref{item:LinkedNhoods}, and hence, witnessed by $S$, we have $g \neq e'$. 
    In particular, this implies that $|V_g| \leq |V_{e'}|$ as $V_g$ is linked to~$\eps$. But also $|V_g|=|N(D)| < |V_e|$ as $D \in \cD_e$, and hence $g$ was a better choice when we picked $f$, a contradiction.
\end{proof}

\begin{proof}[Proof of \cref{main:LinkedTD:Psi}]
    \ref{itm:main:Psi:i} $\Rightarrow$ \ref{itm:main:Psi:ii} is trivial. \ref{itm:main:Psi:ii} $\Rightarrow$ \ref{itemGdelta} is easy (see \cref{sec:Intro}). \ref{itemGdelta} $\Rightarrow$ \ref{itm:main:Psi:i} is \cref{thm:LinkedTD:Psi:Copy}.
\end{proof}

\begin{proof}[Proof of \cref{maincor:DisplayingEndDegrees1}]
    Apply \cref{main:LinkedTD:Psi} and \cref{lem:compDisplaysBoundary,lem:LinkedTightCompDisplaysDomAndDegrees}.
\end{proof}

\begin{proof}[Proof of \cref{maincor:DisplayingEndDegrees2}]
    The set of all undominated ends of $G$ is a $G_\delta$-set (see \cref{sec:Intro} or \cite{koloschin2023end}*{Theorem~10.1}). Apply \cref{maincor:DisplayingEndDegrees2} with $\Psi$ the set of all undominated ends of $G$, to obtain a \td\ $(T, \cV)$. Then in particular, $(T, \cV)$ displays the undominated ends of $G$, and their dominating vertices. This implies that for every node $t \in T$ and every ray $R \subseteq T$ starting at $t$, there is $t' \in R$ with $V_t \cap V_{t'} = \emptyset$, as desired.
\end{proof}

\bibliographystyle{amsplain}
\bibliography{collective.bib}

\arXivOrNot{
\appendix

\section{A counterexample regarding connected parts}
\label{Appendix:counterexampleConnectedParts}

   In this appendix we present a counterexample that shows that \ref{itm:main:Psi:i} of \cref{main:LinkedTD:Psi} cannot be strengthened so that the \td\ has connected parts. More precisely, we prove the following:

 \begin{example} \label{ex:ConnectedParts}
     \emph{There exists a locally finite graph which admits no linked, tight, componental, rooted \td~of finite adhesion with connected parts that displays all its (undominated) ends.}
 \end{example}

     For this, we need the following lemma, which essentially says that the linked property of a linked, tight, componental \td\ of finite adhesion extends to an edge-end pairs.

  \begin{lemma}\label{lem:linkedTdSeparator}
     Let $G$ be a graph, $(T,\cV)$ a linked, tight and componental rooted \td\ of $G$ of finite adhesion, and let $\eps$ be an end in the boundary of $(T, \cV)$. For any edge $e \in R_\eps$ there exists an edge $e<_Tf \in R_{\eps}$ such that $V_f$ is a $\size$-minimal $V_e$--$\eps$ separator.
 \end{lemma}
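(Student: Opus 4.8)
The plan is to reduce the claim to finding an adhesion set of least possible size on $R_\eps$ strictly above $e$, and then to pin such an adhesion set down by pushing a fixed minimum $V_e$--$\eps$ separator far up $R_\eps$ and invoking linkedness together with the finite Menger theorem.

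I would set $k := \min\{|S| : S \text{ a finite } V_e\text{--}\eps\text{ separator}\}$ and first note that every adhesion set met along $R_\eps$ beyond $e$ is already such a separator. Indeed, for an edge $f \in R_\eps$ with $e <_T f$, componentality makes $G\strictup f$ connected, so $G\strictup f = C(V_f,\eps)$ and tightness gives $N(G\strictup f) = V_f$; and since the nodes whose bag contains a fixed vertex induce a subtree of $T$, a vertex of $V_e$ lying in $V(G\strictup f) = U^f_y \setminus V_f$ would already have to lie in $V_f$, which is absurd. Hence $V_e \cap V(G\strictup f) = \emptyset$, so $C(V_f,\eps)$ avoids $V_e$, and $|V_f| \ge k$ for all such $f$. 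It therefore suffices to exhibit one $f >_T e$ on $R_\eps$ with $|V_f| \le k$.

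To do this, I would fix a $V_e$--$\eps$ separator $S$ with $|S| = k$ and set $C := C(S,\eps)$. Along a ray the bags containing a fixed vertex form a subpath, so each vertex of the finite set $S$ lies either in cofinitely many bags along $R_\eps$, and hence in $V_f$ for every far enough $f \in E(R_\eps)$, or in only finitely many, in which case a short tree argument places it strictly below every far enough edge of $R_\eps$. Either way $S \cap V(G\strictup f) = \emptyset$ for all sufficiently far $f$, so $G\strictup f \subseteq C$, and then $V_f = N(G\strictup f) \subseteq C \cup S$, because a vertex of $V_f$ outside $S$ is adjacent to $C$ and therefore lies in $C$. Consequently every $V_e$--$V_f$ path in $G$ meets $S$ --- it enters $C$ across $N(C) \subseteq S$, or it ends in $S$ --- so $S$ is a $V_e$--$V_f$ separator, and by Menger's theorem there are at most $|S| = k$ pairwise disjoint $V_e$--$V_f$ paths in $G$. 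On the other hand, since $(T,\cV)$ is linked there are exactly $\min\{|V_g| : g \in E(eTf)\}$ pairwise disjoint $V_e$--$V_f$ paths, so some $g \in E(eTf) \subseteq E(R_\eps)$ has $|V_g| \le k$. Every such $g$ with $g \neq e$ lies on $R_\eps$ above $e$ and hence satisfies $|V_g| \ge k$ by the previous step, so $|V_g| = k$; and as $V_g$ is then also a $V_e$--$\eps$ separator, setting $f := g$ would finish the proof --- provided this minimum is not attained only at $e$.

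I expect that last proviso to be the main obstacle: in the degenerate case $|V_e| = k$, the bottleneck along $E(eTf)$ might be realised only by $V_e$ itself, and the argument above does not then directly produce an $f$ with $e <_T f$. Resolving this seems to hinge on exactly how "$V_e$--$\eps$ separator" is meant in the surrounding text --- in particular on whether $S = V_e$ counts --- and I would either appeal to that convention to make the case vacuous, or else pass to the first edge above $e$ and use the nontrivial Menger theorem for ends to control its adhesion set. The other ingredients --- the identification $G\strictup f = C(V_f,\eps)$, the persistence of $S$ as a $V_e$--$V_f$ separator far along $R_\eps$, and one use each of finite Menger and of linkedness --- are all routine.
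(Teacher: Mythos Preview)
Your argument mirrors the paper's: fix a minimum-size $V_e$--$\eps$ separator, go far enough up $R_\eps$ that $G\strictup g$ avoids it, invoke linkedness between $e$ and $g$, and read off an adhesion set of the right size via Menger. The paper is no more careful than you on the point you flag --- it simply picks ``an edge $f\in eTg$'' realising the minimum adhesion and never verifies that $f\neq e$.

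Your worry about the case $|V_e|=k$ is justified; the lemma is in fact false as stated. Let $G$ be the one-way ladder on $\{a_i,b_i:i\ge 0\}$, let $T$ be a ray $t_0t_1t_2\cdots$ with $V_{t_0}=\{a_0\}$ and $V_{t_i}=\{a_{i-1},b_{i-1},a_i,b_i\}$ for $i\ge 1$; this is a linked, tight, componental rooted \td\ of finite adhesion with the unique end $\eps$ in its boundary. For $e=t_0t_1$ we have $V_e=\{a_0\}$, and under the paper's own definition $\{a_0\}$ is a $V_e$--$\eps$ separator of size~$1$, while every $V_f$ with $e<_Tf$ has $|V_f|=2$. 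Neither of your proposed patches recovers the general statement in this example. However, the paper only invokes the lemma in its appendix, where $|V_e|\ge 4$ and the minimal separators in question have size $2$ or $3$; in that setting the minimum over $E(eTg)$ is strictly below $|V_e|$, the degenerate case cannot occur, and both your argument and the paper's go through unchanged.
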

 
 \begin{proof}
     Let $e \in R_\eps$ be some edge, and let $X$ be a $\size$-minimal $V_e$--$\eps$ separator. As $X$ is finite, there is an edge $g \in R_\eps$, with $e<_Tg$, such that $C(V_g, \eps)$ is disjoint to $X$. Since $(T, \cV)$ is linked, there exists an edge $f \in eTg$ such that there are $|V_f|$ pairwise disjoint $V_e$--$V_g$ paths in $G$. Moreover, as $(T, \cV)$ is tight and componental, it follows that $V_g = N(G \strictup g)$ and $G \strictup g = C(V_g, \eps)$. Hence, every $V_e$--$V_g$ path can be extended in $G\strictup g$ to an $\eps$-ray, which implies that it must meet $X$, and thus $|V_f| \leq |X|$. On the other side, any $V_e$--$V_g$ separator is also a $V_e$--$\eps$ separator, and thus, witnessed by $X$, it follows  that $|X|\leq |V_f|$.
 \end{proof}

 We are now ready to prove \cref{ex:ConnectedParts}.

 \begin{proof}[Proof of \cref{ex:ConnectedParts}]
     Given $n \in \mathbb{N}$, let $H_n$ be the grid on $[n] \times \N$, that is the graph obtained by $n$ disjoint rays where we add edges between any two vertices on `neighbouring' rays that have the same distance to the starting vertex on its ray. More formally, $H_n$ is the graph with vertex set $[n] \times \mathbb{N}$ and edges between vertices $(i,j)$ and $(l,k)$ whenever $i=l$ and $|j-k|=1$, or $|i-j|=1$ and $j=k$.
     We call $X \subseteq V(H_n)$ a \defn{rung} of $H_n$ if $X$ is of the form $\{(i,j) : i \in [n]\}$ for some $j \in \N$ (see the left graph in \cref{fig:connectedParts}).

     Let $Q$ denote the graph depicted on the right in \cref{fig:connectedParts}. Formally, $Q=(V,E)$ is the graph with vertex set $V = Q_X\; \dot\cup\; Y\; \dot\cup\; (\{1\} \times [3] \times \mathbb{N})\; \dot\cup\; (\{2\} \times [4] \times \mathbb{N})$, where $Q_X := \{x_1, x_2, x_3, s_1\}$ and $Y := \{y_1, y_2, s_2 \}$ are `fresh' vertices in the sense that they are disjoint to any of the other vertices denoted in the union of $V$. Let $S := \{ s_1, s_2 \}$. The edges of $Q$ consists of all edges such that $Q[1 \times [3] \times \mathbb{N}]$ is isomorphic to an $H_3$, $Q[2 \times [4] \times \mathbb{N}]$ is isomorphic to an $H_4$, both with respect to the natural embedding, and the sets $E[Y, \{2\} \times [4] \times \{1\}]$, $E[S, \{1\} \times [3] \times \{1\}]$ and $E[\{x_1, x_2, x_3\},\{ y_1, y_2 \}]$ contain all possible edges, and $E$ includes the edges $x_1x_2, x_2x_3$ and $x_3s_1$. 

     \begin{figure}[htbp]
     \centering
     \includegraphics[scale=1]{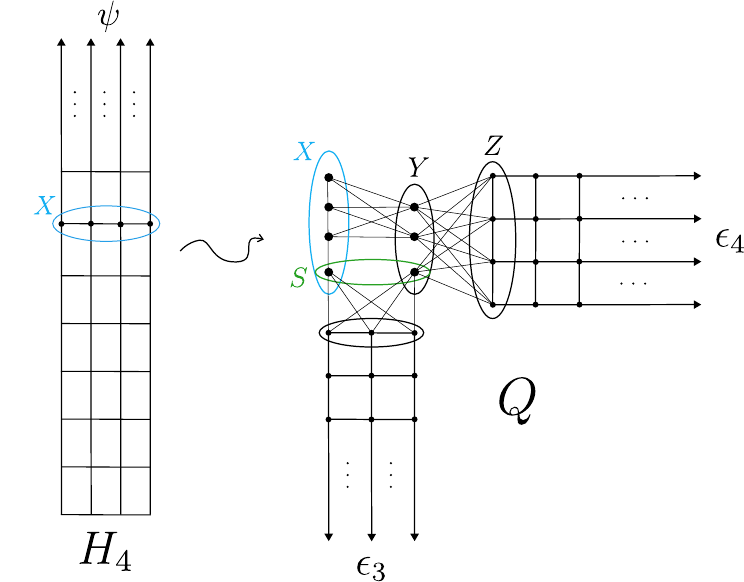}
     \caption{Depicted is a part of the construction of $G$.}
     \label{fig:connectedParts}
     \end{figure}
    
     For the construction of our counterexample $G$ we start with a copy $H$ of $H_4$. Let $G$ be the graph obtained by gluing, along every rung $X$ of $H_4$, a copy of $Q$ by identifying $X$ with $Q_X$ (see also \cref{fig:connectedParts} for a visualisation). Specifically, we identify $x_i \in Q_X$ with the unique vertex in $X$ with its first coordinate equal to $i$ for $i=1, \dots ,4$, where we set $x_4 := s_1$. Moreover, we glue this copy of $Q$ in such a way that it is disjoint to any other copy of $Q$ glued along a different rung of $H_4$ and this copy of $Q$ intersects $H_4$ in exactly $X_Q$. This completes the construction of $G$.
     \smallskip
    
     We claim that $G$ admits no linked, tight, componental, rooted \td~of finite adhesion with connected parts that displays all its ends. (Note that $G$ is locally finite, and hence all its ends are undominated.) 
     Assume for a contradiction that $(T, \cV)$ is such a \td. Denote by $\psi$ the unique end induced by $H_4 \subseteq G$, and let $R_\psi$ be the (unique) rooted ray of $T$ that `corresponds' to $\psi$. 
     As $\psi$ has degree four, the adhesion set $V_e$ of some edge $e$ on $R_\psi$ must have size at least four. Since $V_e$ is finite and $\psi$ lives in $G\strictup e$, there is a rung $X$ of $H_4$ in $G\strictup e$ such that the isomorphic copy of $Q$ glued along $X$ is a subset of $G\strictup e$. Let us denote by $Q$ the specific copy of $Q$ we glued along $X$. Further, let $X=Q_X$, $Y$, $S$ and $Z:=N(Y \setminus \{ s_2 \}) \setminus X$ be the subsets of $V(Q)$ as defined above (see also \cref{fig:connectedParts}). In particular, $S=\{s_1, s_2 \}$ where $\{s_1 \}= X \cap S$ and $\{ s_2 \} = Y \cap S$. Lastly, let $\eps_3$ denote the (unique) end of $Q$ of degree three and $\eps_4$ the (unique) end of $Q$ of degree four. 
     \smallskip

     Observe that the set $S$ is a $\size$-minimal $Q_X$--$\eps_3$ separator in $Q$. Since we glued $H$ and $Q$ together in $X = Q_X$, and as $V_e$ has size four, it is easy to check that $S$ is also a $\size$-minimal $V_e$--$\eps_3$ separator in $G$; in fact, it is the only such separator.
     As $S$ has size two, it follows by \cref{lem:linkedTdSeparator} that there exists an edge $f \in R_{\eps_3}$ with $e<_Tf$ and $|V_f|=2$.
     Since $S$ is the only $V_e$--$\eps$ separator of size two, it follows that $V_f = S$. Let $t$ be the (unique) node of $T$ below $f$. By assumption, $G[V_t]$ is connected. As $S = V_f \subseteq V_t$ and $s_1, s_2$ are not joined by an edge, it follows that $V_t$ contains at least one neighbour of $s_1$ and one of $s_2$. Since $t <_T f$, its bag $V_t$ does not contain any vertices from $G\strictup f = C(S, \eps_3)$, and hence $V_t$ must contains a vertex of $Z$ and a vertex of $X \setminus \{s_1\}$ (cf.\ \cref{fig:connectedParts}).

     Similarly, $Y$ is a $\size$-minimal $V_e$--$\eps_4$ separator in $G$, and hence, by \cref{lem:linkedTdSeparator} and because $|Y| = 3$, there exists an edge $g \in R_{\eps_4}$ with $e <_Tg$ and $|V_g|=3$. 
     As there are four disjoint $V_e$--$X$ paths in $G$, the separator $V_g$ must meet $V(Q)$. In fact, it is easy to check that there are exactly two possibilities for $V_g$: either $V_g = \{y_1, y_2, s_1 \}$ or $V_g = \{y_1, y_2, s_2 \}$.

     In the former case, both $\eps_3$ and $\eps_4$ live in $G\strictup g$, and hence $g$ must lie on both $R_{\eps_3}$ and $R_{\eps_4}$. As $V_f = S$ meets $G \strictup g$, it holds that $g <_T f$, and hence $g <_T t$. 
     Moreover, $G \strictup g = C(V_g, \eps_3)$ is disjoint from $X$, and hence $V_g$ must contain all vertices in $V_t \cap X$. But $V_g \cap X = \{s_1\}$, while $V_t$ contains a vertex of $X \setminus \{s_1\}$, a contradiction.

     In the latter case, $V_g$ separates $\eps_3$ from $\eps_4$, i.e.\ $C(V_g, \eps_3) \neq C(V_g, \eps_4)$. As $(T, \cV)$ is componental, $\eps_3$ does not live in $G\strictup g$, and hence $g$ does not lie on the ray $R_{\eps_3}$. In particular, $t$ and $g$ are $<_T$-incomparable. But $Z \subseteq G \strictup g$ and $V_t$ contains a vertex of $Z$, which implies that $g <_T t$, a contradiction.
 \end{proof}
}{}

\end{document}